\newtheorem{theorem}{Theorem}[section]
\newtheorem{acknowledgement}{Acknowledgement}[section]
\newtheorem{conclusion}{Conclusion}[section]
\newtheorem{corollary}{Corollary}[section]
\newtheorem{lemma}{Lemma}[section]
\newtheorem{proposition}{Proposition}[section]
\newenvironment{proof}[1][Proof]{\noindent\textbf{#1.} }{\ \rule{0.5em}{0.5em}}
\numberwithin{equation}{section}
\def\({\left ( }
\def\){\right )}
\def\<{\left < }
\def\>{\right >}
\begin{document}

\title{On the properties of iterated binomial transforms for the Padovan and
Perrin matrix sequences}
\author{\texttt{Nazmiye Yilmaz} \: and \: \texttt{Necati Taskara}}
\date{Department of Mathematics, Faculty of Science,\\
Selcuk University, Campus, 42075, Konya - Turkey \\
[0.3cm] \textit{nzyilmaz@selcuk.edu.tr} \: and \: \textit{%
ntaskara@selcuk.edu.tr}}
\maketitle

\begin{abstract}
In this study, we apply "$r$" times the binomial transform to the Padovan
and Perrin matrix sequences. Also, the Binet formulas, summations,
generating functions of these transforms are found using recurrence
relations. Finally, we give the relationships of between iterated binomial
transforms for Padovan and Perrin matrix sequences.

\textbf{Keywords}: Padovan matrix sequence, Perrin matrix sequence, iterated
binomial transform.

\textit{Ams Classification: 11B65, 11B83.}
\end{abstract}

\section{Introduction and Preliminaries}

\bigskip There are so many studies in the literature that concern about the
special number sequences such as Fibonacci, Lucas, Pell, Padovan and Perrin
(see, for example \cite{Koshy,FalconPlaza1,Falcon,ShannonAndersonHoradam},
and the references cited therein). In Fibonacci numbers, there clearly
exists the term Golden ratio which is defined as the ratio of two
consecutive of Fibonacci numbers that converges to $\alpha =\frac{1+\sqrt{5}%
}{2}$. It is also clear that the ratio has so many applications in,
specially, Physics, Engineering, Architecture, etc.\cite{Marek,Marek1}. In a
similar manner, the ratio of two consecutive Padovan and Perrin numbers
converges to 
\begin{equation*}
\alpha _{P}=\sqrt[3]{\dfrac{1}{2}+\dfrac{1}{6}\sqrt{\dfrac{23}{3}}}+\sqrt[3]{%
\dfrac{1}{2}-\dfrac{1}{6}\sqrt{\dfrac{23}{3}}}
\end{equation*}%
that is named as \textit{Plastic constant }and was firstly defined in 1924
by G\'{e}rard Cordonnier. He described applications to architecture and
illustrated the use of the Plastic constant in many buildings.

Although the study of Perrin numbers started in the beginning of 19. century
under different names, the master study was published in 2006 by Shannon et
al. in \cite{ShannonAndersonHoradam}. The authors defined the Perrin $%
\left\{ R_{n}\right\} _{n\in \mathbb{N}}$ and Padovan $\left\{ P_{n}\right\}
_{n\in \mathbb{N}}$ sequences as in the forms 
\begin{equation}
R_{n+3}=R_{n+1}+R_{n},\ \text{where}~\ \ R_{0}=3,\ R_{1}=0,\ R_{2}=2
\label{1.01}
\end{equation}%
and 
\begin{equation}
P_{n+3}=P_{n+1}+P_{n},\ \text{where}~\ \ P_{0}=P_{1}=P_{2}=1\,,  \label{1.02}
\end{equation}%
respectively.

On the other hand, the matrix sequences have taken so much interest for
different type of numbers \cite%
{CivcivTurkmen,GulecTaskara,YazlikTaskaraUsluYilmaz,YilmazTaskara}. For
instance, in \cite{CivcivTurkmen}, authors defined new matrix
generalizations for Fibonacci and Lucas numbers, and using essentially a
matrix approach they showed some properties of these matrix sequences. In 
\cite{GulecTaskara}, Gulec and Taskara gave new generalizations for $(s,t)$%
-Pell and $(s,t)$-Pell Lucas sequences for Pell and Pell--Lucas numbers.
Considering these sequences, they defined the matrix sequences which have
elements of $(s,t)$-Pell and $(s,t)$-Pell Lucas sequences. Also, they
investigated their properties. In \cite{YazlikTaskaraUsluYilmaz}, authors
defined\ a new sequence in which it generalizes $(s,t)$-Fibonacci and\ $%
(s,t) $-Lucas sequences at the same time. After that, by using it, they
established generalized $(s,t)$-matrix sequence. Finally, they presented
some important relationships among this new generalization, $(s,t)$%
-Fibonacci and $(s,t)$-Lucas sequences and their matrix sequences. Moreover,
in \cite{YilmazTaskara}, authors develop the matrix sequences that represent
Padovan and Perrin numbers and examined their properties.

In \cite{YilmazTaskara}, for $n\geqslant 0,$ authors defined Padovan and
Perrin matrix sequences as in the forms 
\begin{equation}
{\mathcal{P}}_{n+3}={\mathcal{P}}_{n+1}+{\mathcal{P}}_{n},  \label{1.03}
\end{equation}%
where%
\begin{equation*}
{\mathcal{P}}_{0}=\left( 
\begin{array}{ccc}
1 & 0 & 0 \\ 
0 & 1 & 0 \\ 
0 & 0 & 1%
\end{array}%
\right) ,{\mathcal{P}}_{1}=\left( 
\begin{array}{ccc}
0 & 1 & 0 \\ 
0 & 0 & 1 \\ 
1 & 1 & 0%
\end{array}%
\right) ,{\mathcal{P}}_{2}=\left( 
\begin{array}{ccc}
0 & 0 & 1 \\ 
1 & 1 & 0 \\ 
0 & 1 & 1%
\end{array}%
\right) ,
\end{equation*}%
and \ 
\begin{equation}
{\mathcal{R}}_{n+3}={\mathcal{R}}_{n+1}+{\mathcal{R}}_{n},  \label{1.04}
\end{equation}%
where 
\begin{equation*}
{\mathcal{R}}_{0}=\left( 
\begin{array}{ccc}
4 & 2 & -3 \\ 
-3 & 1 & 2 \\ 
2 & -1 & 1%
\end{array}%
\right) ,{\mathcal{R}}_{1}=\left( 
\begin{array}{ccc}
-3 & 1 & 2 \\ 
2 & -1 & 1 \\ 
1 & 3 & -1%
\end{array}%
\right) ,{\mathcal{R}}_{2}=\left( 
\begin{array}{ccc}
2 & -1 & 1 \\ 
1 & 3 & -1 \\ 
-1 & 0 & 3%
\end{array}%
\right) .
\end{equation*}

\begin{proposition}
\label{prop1}[10] Let us consider $n,m\geqslant 0,$ the following properties
are hold:
\end{proposition}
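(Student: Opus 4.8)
The plan is to establish every listed identity by induction on the index, using the third-order recurrences \eqref{1.03} and \eqref{1.04} together with the explicit initial matrices. The one structural fact that organizes the whole argument is that ${\mathcal{P}}_{1}$ is a companion-type matrix whose characteristic polynomial is $x^{3}-x-1$; by the Cayley--Hamilton theorem this gives ${\mathcal{P}}_{1}^{3}={\mathcal{P}}_{1}+{\mathcal{P}}_{0}$, which is exactly the recurrence \eqref{1.03} read at $n=0$. Since ${\mathcal{P}}_{0}=I$ and a single direct multiplication confirms ${\mathcal{P}}_{1}^{2}={\mathcal{P}}_{2}$, induction yields the power representation ${\mathcal{P}}_{n}={\mathcal{P}}_{1}^{\,n}$ for all $n\geqslant 0$: assuming the claim for all indices up to $n+2$, the recurrence gives ${\mathcal{P}}_{n+3}={\mathcal{P}}_{n+1}+{\mathcal{P}}_{n}={\mathcal{P}}_{1}^{\,n}\left({\mathcal{P}}_{1}+I\right)={\mathcal{P}}_{1}^{\,n}{\mathcal{P}}_{1}^{\,3}={\mathcal{P}}_{1}^{\,n+3}$.

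Once ${\mathcal{P}}_{n}={\mathcal{P}}_{1}^{\,n}$ is available, the product and index-additivity statements are immediate from the algebra of matrix powers: ${\mathcal{P}}_{m+n}={\mathcal{P}}_{1}^{\,m+n}={\mathcal{P}}_{m}{\mathcal{P}}_{n}={\mathcal{P}}_{n}{\mathcal{P}}_{m}$, so the family commutes and composes additively. For the Perrin side I would first check the single identity ${\mathcal{R}}_{1}={\mathcal{R}}_{0}{\mathcal{P}}_{1}$ by directly multiplying the two given $3\times 3$ matrices, and then observe that the sequence $\left\{{\mathcal{R}}_{0}{\mathcal{P}}_{n}\right\}$ satisfies the same recurrence \eqref{1.04} with the same initial data as $\left\{{\mathcal{R}}_{n}\right\}$; induction then forces ${\mathcal{R}}_{n}={\mathcal{R}}_{0}{\mathcal{P}}_{n}={\mathcal{R}}_{0}{\mathcal{P}}_{1}^{\,n}$. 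From this, the mixed relations such as ${\mathcal{R}}_{m+n}={\mathcal{R}}_{m}{\mathcal{P}}_{n}={\mathcal{P}}_{n}{\mathcal{R}}_{m}$ and any entrywise expressions of the matrix entries in terms of the scalar sequences $P_{n}$ and $R_{n}$ follow by reading off corresponding positions.

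The only genuinely computational step is the base-case bookkeeping: one must verify ${\mathcal{P}}_{1}^{2}={\mathcal{P}}_{2}$, ${\mathcal{P}}_{1}^{3}={\mathcal{P}}_{1}+I$, and ${\mathcal{R}}_{0}{\mathcal{P}}_{1}={\mathcal{R}}_{1}$ by hand, and seed each induction with enough consecutive initial values. These are routine $3\times 3$ multiplications, so I anticipate no conceptual difficulty. The main point of care is the order of the recurrence: because \eqref{1.03} and \eqref{1.04} relate terms three apart, each induction must carry three consecutive base cases rather than one, and the index shifts must be kept aligned so that the inductive step closes correctly.
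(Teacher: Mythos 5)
Your proposal is mathematically sound in outline, but note first that the paper itself offers no proof of this proposition: it is stated with the citation [10] and quoted from that source, so there is no in-paper argument to compare against. Relative to the term-by-term inductive verification one finds in the cited source, your route through the power representation ${\mathcal{P}}_{n}={\mathcal{P}}_{1}^{\,n}$ is more structural and arguably cleaner: the characteristic polynomial of ${\mathcal{P}}_{1}$ is indeed $x^{3}-x-1$, so Cayley--Hamilton gives ${\mathcal{P}}_{1}^{3}={\mathcal{P}}_{1}+{\mathcal{P}}_{0}$, and together with the direct check ${\mathcal{P}}_{1}^{2}={\mathcal{P}}_{2}$ the three-term induction you describe closes correctly; the identity ${\mathcal{P}}_{m}{\mathcal{P}}_{n}={\mathcal{P}}_{n+m}$ is then immediate from the algebra of powers. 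The base-case computations you cite all check out.

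Two patches are needed on the Perrin side, one of them substantive. First, seeding the identification ${\mathcal{R}}_{n}={\mathcal{R}}_{0}{\mathcal{P}}_{n}$ requires three consecutive initial agreements, and ${\mathcal{R}}_{0}{\mathcal{P}}_{2}={\mathcal{R}}_{2}$ is a genuinely separate verification from ${\mathcal{R}}_{0}{\mathcal{P}}_{1}={\mathcal{R}}_{1}$; you flag the ``three base cases'' issue in general terms but list only the latter among your explicit checks. (The missing one does hold: ${\mathcal{R}}_{1}{\mathcal{P}}_{1}={\mathcal{R}}_{2}$.) Second, the representation ${\mathcal{R}}_{n}={\mathcal{R}}_{0}{\mathcal{P}}_{1}^{\,n}$ yields ${\mathcal{R}}_{n}{\mathcal{P}}_{m}={\mathcal{R}}_{n+m}$ at once, but the other half of the claimed identity, ${\mathcal{P}}_{m}{\mathcal{R}}_{n}={\mathcal{R}}_{n}{\mathcal{P}}_{m}$, requires that ${\mathcal{P}}_{1}$ commute with ${\mathcal{R}}_{0}$, and this follows from nothing you have established: ${\mathcal{R}}_{0}{\mathcal{P}}_{1}={\mathcal{R}}_{1}$ does not by itself give ${\mathcal{P}}_{1}{\mathcal{R}}_{0}={\mathcal{R}}_{1}$. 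The commutation is true and can be supplied either by one further $3\times 3$ multiplication, or more elegantly by observing that ${\mathcal{R}}_{0}=4{\mathcal{P}}_{0}+2{\mathcal{P}}_{1}-3{\mathcal{P}}_{2}=4I+2{\mathcal{P}}_{1}-3{\mathcal{P}}_{1}^{2}$ is a polynomial in ${\mathcal{P}}_{1}$, whence every ${\mathcal{R}}_{n}$ commutes with every ${\mathcal{P}}_{m}$. With these two lines added, your argument is complete.
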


\begin{itemize}
\item ${\mathcal{P}}_{m}{\mathcal{P}}_{n}={\mathcal{P}}_{n+m},$

\item ${\mathcal{P}}_{m}{\mathcal{R}}_{n}={\mathcal{R}}_{n}{\mathcal{P}}_{m}=%
{\mathcal{R}}_{n+m}.$
\end{itemize}

In addition, some matrix based transforms can be introduced for a given
sequence. Binomial transform is one of these transforms and there is also
other ones such as rising and falling binomial transforms(see \cite%
{Prodinger,Chen,YazlikYilmazTaskara}). Given an integer sequence $X=\left\{
x_{0},x_{1},x_{2},\ldots \right\} ,$ the binomial transform $B$ of the
sequence $X$, $B\left( X\right) =\left\{ b_{n}\right\} ,$\ is given by%
\begin{equation*}
b_{n}=\sum_{i=0}^{n}\binom{n}{i}x_{i}.
\end{equation*}

In \cite{FalconPlaza,BhadouriaJhalaSingh}, authors gave the application of
the several class of transforms to the $k$-Fibonacci and $k$-Lucas sequence.
In \cite{YilmazTaskara1}, \ the authors applied the binomial transforms to
the Padovan $\left( {\mathcal{P}}_{n}\right) $ and Perrin matrix sequences $%
\left( {\mathcal{R}}_{n}\right) $.

\begin{proposition}
\label{prop2}[14] For $n>0$,
\end{proposition}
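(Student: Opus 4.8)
The plan is to derive the governing recurrence for the binomial transform $B\left({\mathcal{P}}\right)_n=\sum_{i=0}^{n}\binom{n}{i}{\mathcal{P}}_i$ directly from its definition together with the Padovan matrix relation \eqref{1.03}; the Perrin statement will then follow verbatim, since \eqref{1.04} has the identical form. The conceptual heart of the argument is the shift of the characteristic equation. The Padovan recurrence has characteristic polynomial $x^{3}-x-1$, and because $\sum_{i=0}^{n}\binom{n}{i}\lambda^{i}=(1+\lambda)^{n}$, the binomial transform carries a geometric term $\lambda^{n}$ to $(1+\lambda)^{n}$. Hence the numbers governing $B\left({\mathcal{P}}\right)_n$ are the shifts $1+\alpha,\,1+\beta,\,1+\gamma$ of the roots $\alpha,\beta,\gamma$ of $x^{3}-x-1$, and these are exactly the roots of
\begin{equation*}
(x-1)^{3}-(x-1)-1=x^{3}-3x^{2}+2x-1.
\end{equation*}
Reading off the coefficients yields the expected recurrence $B\left({\mathcal{P}}\right)_{n+3}=3\,B\left({\mathcal{P}}\right)_{n+2}-2\,B\left({\mathcal{P}}\right)_{n+1}+B\left({\mathcal{P}}\right)_{n}$.

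To make this rigorous at the level of matrices rather than of scalar roots, I would instead run the computation inside the matrix ring. Starting from $B\left({\mathcal{P}}\right)_{n+1}=\sum_{i=0}^{n+1}\binom{n+1}{i}{\mathcal{P}}_i$ and applying Pascal's rule $\binom{n+1}{i}=\binom{n}{i}+\binom{n}{i-1}$, each index shift of the transform becomes the previous transform plus an index-shifted binomial sum. Iterating this bookkeeping three times, the combination $B\left({\mathcal{P}}\right)_{n+3}-3\,B\left({\mathcal{P}}\right)_{n+2}+2\,B\left({\mathcal{P}}\right)_{n+1}-B\left({\mathcal{P}}\right)_{n}$ should collapse into a single sum of the shape $\sum_{i}\binom{n}{i}\left({\mathcal{P}}_{i+3}-{\mathcal{P}}_{i+1}-{\mathcal{P}}_{i}\right)$, every summand of which is the zero matrix by \eqref{1.03}. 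The base cases are settled by evaluating $B\left({\mathcal{P}}\right)_0={\mathcal{P}}_0$, $B\left({\mathcal{P}}\right)_1={\mathcal{P}}_0+{\mathcal{P}}_1$, and $B\left({\mathcal{P}}\right)_2={\mathcal{P}}_0+2{\mathcal{P}}_1+{\mathcal{P}}_2$ from the given initial matrices.

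The main obstacle is combinatorial rather than conceptual: carrying the telescoping of the binomial coefficients correctly through the triple application of Pascal's rule, and in particular disposing of the boundary terms at $i=0$ and $i=n$, where one must interpret $\binom{n}{-1}$ and $\binom{n}{n+1}$ as zero. Once those edge contributions are tracked, the cancellation against \eqref{1.03} is immediate, and the Perrin case requires no separate work beyond substituting ${\mathcal{R}}$ for ${\mathcal{P}}$ throughout.
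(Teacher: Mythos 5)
Your proposal addresses only half of the statement, and that is the genuine gap. Proposition 1.2 is quoted from reference [14] (so the paper itself offers no proof to compare against), but it has four parts: besides the recurrences $i)$--$ii)$, it asserts the product identities $iii)$ $b_nb_m=b_{n+m}$ and $iv)$ $b_nc_m=c_nb_m=c_{n+m}$. Your plan never touches these, and they cannot come out of your telescoping argument: they are not consequences of the linear recurrence at all, but of the multiplicative structure of the underlying matrix sequences. The needed ingredients are Proposition 1.1 (${\mathcal{P}}_m{\mathcal{P}}_n={\mathcal{P}}_{n+m}$ and ${\mathcal{P}}_m{\mathcal{R}}_n={\mathcal{R}}_n{\mathcal{P}}_m={\mathcal{R}}_{n+m}$) together with the Vandermonde convolution, giving
\begin{equation*}
b_nb_m=\sum_{i=0}^{n}\sum_{j=0}^{m}\binom{n}{i}\binom{m}{j}{\mathcal{P}}_{i+j}
=\sum_{k=0}^{n+m}\left(\sum_{j}\binom{n}{j}\binom{m}{k-j}\right){\mathcal{P}}_k
=\sum_{k=0}^{n+m}\binom{n+m}{k}{\mathcal{P}}_k=b_{n+m},
\end{equation*}
and the same computation with ${\mathcal{R}}$ in one factor yields $iv)$. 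This omission matters downstream: the base case $r=1$ of Theorem \ref{teo3} explicitly invokes part $iii)$, and Theorems \ref{teo4} and \ref{teo5} lean on the same identities, so a proof of Proposition 1.2 that stops at the recurrences leaves the later results without their foundation.

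For the parts you do treat, the argument is correct. The root-shift heuristic gives the right characteristic polynomial $(x-1)^3-(x-1)-1=x^3-3x^2+2x-1$, and your matrix-level telescoping does close: setting $b_n^{(s)}=\sum_{i=0}^{n}\binom{n}{i}{\mathcal{P}}_{i+s}$, Pascal's rule gives $b_{n+1}^{(s)}=b_n^{(s)}+b_n^{(s+1)}$, so $b_{n+1}=b_n^{(0)}+b_n^{(1)}$, $b_{n+2}=b_n^{(0)}+2b_n^{(1)}+b_n^{(2)}$, $b_{n+3}=b_n^{(0)}+3b_n^{(1)}+3b_n^{(2)}+b_n^{(3)}$, and hence $b_{n+3}-3b_{n+2}+2b_{n+1}-b_n=-b_n^{(0)}-b_n^{(1)}+b_n^{(3)}=\sum_{i=0}^{n}\binom{n}{i}\left({\mathcal{P}}_{i+3}-{\mathcal{P}}_{i+1}-{\mathcal{P}}_i\right)=0$ by \eqref{1.03}, with the boundary terms absorbed by the convention $\binom{n}{-1}=\binom{n}{n+1}=0$; the stated initial matrices agree with ${\mathcal{P}}_0$, ${\mathcal{P}}_0+{\mathcal{P}}_1$, ${\mathcal{P}}_0+2{\mathcal{P}}_1+{\mathcal{P}}_2$. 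So complete the proposal by adjoining the Vandermonde argument above for $iii)$ and $iv)$ (it is the $r=1$ instance of the paper's own proof of Theorem \ref{teo3}), and the proof will be whole.
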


\begin{description}
\item[$i)$] Recurrence relation of sequences $\left\{ b_{n}\right\} $ is%
\begin{equation}
b_{n+2}=3b_{n+1}-2b_{n}+b_{n-1},  \label{1.05}
\end{equation}

\item with initial conditions $b_{0}=\left( 
\begin{array}{ccc}
1 & 0 & 0 \\ 
0 & 1 & 0 \\ 
0 & 0 & 1%
\end{array}%
\right) $, $b_{1}=\left( 
\begin{array}{ccc}
1 & 1 & 0 \\ 
0 & 1 & 1 \\ 
1 & 1 & 1%
\end{array}%
\right) $ and $b_{2}=\left( 
\begin{array}{ccc}
1 & 2 & 1 \\ 
1 & 2 & 2 \\ 
2 & 3 & 2%
\end{array}%
\right) .$

\item[$ii)$] Recurrence relation of sequences $\left\{ c_{n}\right\} $ is%
\begin{equation}
c_{n+2}=3c_{n+1}-2c_{n}+c_{n-1},  \label{1.06}
\end{equation}

\item with initial conditions $c_{0}=\left( 
\begin{array}{ccc}
4 & 2 & -3 \\ 
-3 & 1 & 2 \\ 
2 & -1 & 1%
\end{array}%
\right) $, $c_{1}=\left( 
\begin{array}{ccc}
1 & 3 & -1 \\ 
-1 & 0 & 3 \\ 
3 & 2 & 0%
\end{array}%
\right) $ and $c_{2}=\left( 
\begin{array}{ccc}
0 & 3 & 2 \\ 
2 & 2 & 3 \\ 
3 & 5 & 2%
\end{array}%
\right) .$

\item[$iii)$] For $n,m\geq 0,$ we have%
\begin{equation*}
b_{n}b_{m}=b_{n+m},
\end{equation*}
where $n\leq m,$

\item[$iv)$] For $n,m\geq 0,$ we have 
\begin{equation*}
b_{n}c_{m}=c_{n}b_{m}=c_{n+m}.
\end{equation*}
\end{description}

Falcon \cite{Falcon1} studied the iterated application of the some Binomial
transforms to the $k$-Fibonacci sequence. For example, author obtained
recurrence relation of the iterated binomial transform for $k$-Fibonacci
sequence%
\begin{equation*}
a_{k,n+1}^{\left( r\right) }=\left( 2r+k\right) a_{k,n}^{\left( r\right)
}-\left( r^{2}+kr-1\right) a_{k,n-1}^{\left( r\right) },\ \ a_{k,0}^{\left(
r\right) }=0\text{ and }a_{k,1}^{\left( r\right) }=1.
\end{equation*}%
Yilmaz and Taskara \cite{YilmazTaskara3} studied the iterated application of
the some Binomial transforms to the $k$-Lucas sequence. For example, author
obtained recurrence relation of the iterated binomial transform for $k$%
-Lucas sequence%
\begin{equation*}
b_{k,n+1}^{\left( r\right) }=\left( 2r+k\right) b_{k,n}^{\left( r\right)
}-\left( r^{2}+kr-1\right) b_{k,n-1}^{\left( r\right) },\ \ b_{k,0}^{\left(
r\right) }=2\text{ and }b_{k,1}^{\left( r\right) }=2r+k.
\end{equation*}

Motivated by [10,14,17,18], the goal of this paper is to apply iteratly the
binomial transforms to the Padovan $\left( {\mathcal{P}}_{n}\right) $ and
Perrin matrix sequences $\left( {\mathcal{R}}_{n}\right) $. Also, the binet
formulas, summations, generating functions of these transforms are found by
recurrence relations. Finally, it is illustrated the relations between\ of
this transforms by deriving new formulas.

\section{Iterated binomial transforms of the Padovan and Perrin matrix
sequences}

In this section, we will mainly focus on iterated binomial transforms of the
Padovan and Perrin matrix sequences to get some important results. In fact,
we will also present the recurrence relations, binet formulas, summations,
generating functions of these transforms.

The iterated binomial transforms of the Padovan $\left( {\mathcal{P}}%
_{n}\right) $ and Perrin matrix sequences $\left( {\mathcal{R}}_{n}\right) $
are demonstrated by $B^{\left( r\right) }=\left\{ b_{n}^{\left( r\right)
}\right\} $ and $C^{\left( r\right) }=\left\{ c_{n}^{\left( r\right)
}\right\} ,$ where $b_{n}^{\left( r\right) }$ and $c_{n}^{\left( r\right) }$
are obtained by applying $"r"$ times the binomial transform to the Padovan
and Perrin matrix sequences. It are obvious that $b_{0}^{\left( r\right) }={%
\mathcal{P}}_{0}$, $b_{1}^{\left( r\right) }=r{\mathcal{P}}_{0}+{\mathcal{P}}%
_{1},\ b_{2}^{\left( r\right) }=r^{2}{\mathcal{P}}_{0}+2r{\mathcal{P}}_{1}+{%
\mathcal{P}}_{2}\ $ and $c_{0}^{\left( r\right) }=\mathcal{R}_{0}$, $%
c_{1}^{\left( r\right) }=r\mathcal{R}_{0}+\mathcal{R}_{1},\ c_{2}^{\left(
r\right) }=r^{2}\mathcal{R}_{0}+2r\mathcal{R}_{1}+\mathcal{R}_{2}.$

The following lemma will be key of the proof of the next theorems.

\begin{lemma}
\label{lem1}For $n\geq 0$ and $r\geq 1,$ the following equalities are hold:
\end{lemma}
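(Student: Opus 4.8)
The plan is to prove both stated equalities from a single closed-form description of the iterated transforms, namely
\[
b_n^{(r)}=\sum_{i=0}^{n}\binom{n}{i}r^{n-i}\mathcal{P}_i,\qquad c_n^{(r)}=\sum_{i=0}^{n}\binom{n}{i}r^{n-i}\mathcal{R}_i .
\]
These formulas already reproduce the listed initial matrices $b_0^{(r)},b_1^{(r)},b_2^{(r)}$ and their Perrin counterparts, so they are the natural candidate for the ``key'' identity. First I would establish them by induction on the number $r$ of applied transforms; the base case $r=1$ is precisely the ordinary binomial transform $b_n=\sum_{i=0}^{n}\binom{n}{i}\mathcal{P}_i$ of the Padovan matrix sequence.

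For the inductive step I would start from the recursive description $b_n^{(r)}=\sum_{j=0}^{n}\binom{n}{j}b_j^{(r-1)}$, insert the induction hypothesis for $b_j^{(r-1)}$, interchange the two summations, and collapse the inner sum with the subset-of-a-subset identity $\binom{n}{j}\binom{j}{i}=\binom{n}{i}\binom{n-i}{j-i}$. This turns the inner sum into $\sum_{j=i}^{n}\binom{n}{i}\binom{n-i}{j-i}(r-1)^{j-i}=\binom{n}{i}(1+(r-1))^{n-i}=\binom{n}{i}r^{n-i}$, which is exactly the claimed coefficient. The computation for $c_n^{(r)}$ is identical, with $\mathcal{R}_i$ replacing $\mathcal{P}_i$ throughout.

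With the closed form available, the recurrence coefficients are read off from the characteristic polynomial. Both $\{\mathcal{P}_n\}$ and $\{\mathcal{R}_n\}$ satisfy $X_{n+3}=X_{n+1}+X_n$, hence share the characteristic polynomial $p(x)=x^3-x-1$. Applying the weighted sum $\sum_i\binom{n}{i}r^{n-i}(\cdot)$ to the eigen-sequence $\lambda^n$ produces $(\lambda+r)^n$, so every characteristic root $\lambda$ is shifted to $\lambda+r$ and $p(x)$ is replaced by $p(x-r)=(x-r)^3-(x-r)-1=x^3-3rx^2+(3r^2-1)x-(r^3-r+1)$. Reading off the coefficients yields
\[
b_{n+3}^{(r)}=3r\,b_{n+2}^{(r)}-(3r^2-1)\,b_{n+1}^{(r)}+(r^3-r+1)\,b_n^{(r)},
\]
and the same relation for $c_{n+3}^{(r)}$; as a check, $r=1$ recovers $b_{n+2}=3b_{n+1}-2b_n+b_{n-1}$ from Proposition \ref{prop2}. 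To keep the argument self-contained one can instead substitute the closed form into the right-hand side, group the terms carrying a fixed $\mathcal{P}_i$, and verify equality using $\mathcal{P}_{i+3}=\mathcal{P}_{i+1}+\mathcal{P}_i$ together with Pascal's rule $\binom{n+1}{i}=\binom{n}{i}+\binom{n}{i-1}$.

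I expect the main obstacle to be the index bookkeeping in this last verification: the three sums on the right-hand side run up to $n+2$, $n+1$ and $n$ and carry different powers of $r$, so the summation ranges and the exponents must be realigned before the coefficient of each $\mathcal{P}_i$ can be compared, and the boundary terms near the top of the range must be handled separately. Since every step is an entrywise identity on $3\times3$ matrices, the matrix setting adds nothing beyond the scalar case, and the Perrin statement follows from the Padovan one verbatim; thus the whole lemma reduces to the closed-form identity proved in the first two steps.
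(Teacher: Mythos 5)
Your computations are correct, but they prove the wrong statement. The equalities asserted by Lemma \ref{lem1} (its body is the description list immediately following the environment) are
\begin{equation*}
b_{n+2}^{\left( r\right) }=\sum_{j=0}^{n+1}\binom{n+1}{j}\left( b_{j}^{\left( r-1\right) }+b_{j+1}^{\left( r-1\right) }\right) ,\qquad c_{n+2}^{\left( r\right) }=\sum_{j=0}^{n+1}\binom{n+1}{j}\left( c_{j}^{\left( r-1\right) }+c_{j+1}^{\left( r-1\right) }\right) ,
\end{equation*}
i.e.\ a one-step descent from transform level $r$ to level $r-1$. The paper's proof is three lines: write $b_{n+2}^{\left( r\right) }=\sum_{j=0}^{n+2}\binom{n+2}{j}b_{j}^{\left( r-1\right) }$ by definition, split off the $j=0$ term, apply Pascal's rule $\binom{n+2}{j}=\binom{n+1}{j}+\binom{n+1}{j-1}$, and reindex the second sum. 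Nothing in your proposal states or derives these identities: the two facts you do prove, the closed form $b_{n}^{\left( r\right) }=\sum_{i=0}^{n}\binom{n}{i}r^{n-i}{\mathcal{P}}_{i}$ and the recurrence $b_{n+2}^{\left( r\right) }=3rb_{n+1}^{\left( r\right) }-\left( 3r^{2}-1\right) b_{n}^{\left( r\right) }+\left( r^{3}-r+1\right) b_{n-1}^{\left( r\right) }$, are the content of Theorem \ref{teo1}, which the lemma exists to serve (the lemma identity is exactly what the paper's induction for Theorem \ref{teo1} consumes, and its shifted variant $b_{n+1}^{\left( r\right) }-b_{n}^{\left( r\right) }=b_{n}^{\left( r\right) }+\sum_{i=0}^{n}\binom{n}{i}b_{i+1}^{\left( r-1\right) }-b_{n}^{\left( r\right) }$ is reused to prove Theorem \ref{teo4}). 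So, judged as a proof of the lemma, there is a genuine gap: the target identities are never addressed.

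That said, what you prove is sound and is in fact a cleaner route to Theorem \ref{teo1} than the paper's double induction on $\left( r,n\right) $: the subset-of-a-subset collapse does give $b_{n}^{\left( r\right) }=\sum_{i=0}^{n}\binom{n}{i}r^{n-i}{\mathcal{P}}_{i}$, and since this transform sends the eigen-sequence $\lambda ^{n}$ to $\left( \lambda +r\right) ^{n}$, the characteristic polynomial $x^{3}-x-1$ of (\ref{1.03}) is shifted to $\left( x-r\right) ^{3}-\left( x-r\right) -1=x^{3}-3rx^{2}+\left( 3r^{2}-1\right) x-\left( r^{3}-r+1\right) $, which is exactly the characteristic polynomial appearing after Theorem \ref{teo1}; the Binet-type expansion you invoke is legitimate here because $x^{3}-x-1$ has three distinct roots (its discriminant is $-23\neq 0$). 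If you want to salvage the submission as a proof of the lemma itself, the fix is short: the stated equalities follow from the recursive definition $b_{n+2}^{\left( r\right) }=\sum_{j=0}^{n+2}\binom{n+2}{j}b_{j}^{\left( r-1\right) }$ and Pascal's rule alone, and your closed-form machinery, while true and useful downstream, neither yields them as stated nor is needed for them.
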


\begin{description}
\item[$i)$] $b_{n+2}^{\left( r\right) }=\sum_{j=0}^{n+1}\binom{n+1}{j}\left(
b_{j}^{\left( r-1\right) }+b_{j+1}^{\left( r-1\right) }\right) ,$

\item[$ii)$] $c_{n+2}^{\left( r\right) }=\sum_{j=0}^{n+1}\binom{n+1}{j}%
\left( c_{j}^{\left( r-1\right) }+c_{j+1}^{\left( r-1\right) }\right) .$
\end{description}

\begin{proof}
Firstly, in here we will just prove $i)$, since $ii)$ can be thought in the
same manner with $i)$.

\begin{description}
\item[$i)$] By using definition of binomial transform, we obtain%
\begin{eqnarray*}
b_{n+2}^{\left( r\right) } &=&\sum_{j=0}^{n+2}\binom{n+2}{j}b_{j}^{\left(
r-1\right) } \\
&=&\sum_{j=1}^{n+2}\binom{n+2}{j}b_{j}^{\left( r-1\right) }+b_{0}^{\left(
r-1\right) }.
\end{eqnarray*}%
And by taking account the well known binomial equality 
\begin{equation*}
\binom{n+2}{i}=\binom{n+1}{i}+\binom{n+1}{i-1},
\end{equation*}%
we have%
\begin{eqnarray*}
b_{n+2}^{\left( r\right) } &=&\sum_{j=1}^{n+1}\binom{n+1}{j}b_{j}^{\left(
r-1\right) }+\sum_{j=1}^{n+2}\binom{n+1}{j-1}b_{j}^{\left( r-1\right)
}+b_{0}^{\left( r-1\right) } \\
&=&\sum_{j=0}^{n+1}\binom{n+1}{j}b_{j}^{\left( r-1\right) }+\sum_{j=0}^{n+1}%
\binom{n+1}{j}b_{j+1}^{\left( r-1\right) } \\
&=&\sum_{j=0}^{n+1}\binom{n+1}{j}\left( b_{j}^{\left( r-1\right)
}+b_{j+1}^{\left( r-1\right) }\right) ,
\end{eqnarray*}%
which is desired result.
\end{description}
\end{proof}

From above Lemma, note that:

\begin{itemize}
\item $b_{n+2}$ is also can be written as $b_{n+2}^{\left( r\right)
}=b_{n+1}^{\left( r\right) }+\sum_{i=0}^{n}\binom{n}{i}\left(
b_{j+1}^{\left( r-1\right) }+b_{j+2}^{\left( r-1\right) }\right) $,

\item $c_{n+2}$ is also can be written as $c_{n+2}^{\left( r\right)
}=c_{n+1}^{\left( r\right) }+\sum_{i=0}^{n}\binom{n}{i}\left(
c_{j+1}^{\left( r-1\right) }+c_{j+2}^{\left( r-1\right) }\right) $.
\end{itemize}

\begin{theorem}
\label{teo1}For $n\geq 0$ and $r\geq 1,$ the recurrence relations of
sequences $\left\{ b_{n}^{\left( r\right) }\right\} $ and $\left\{
c_{n}^{\left( r\right) }\right\} $ are%
\begin{equation}
b_{n+2}^{\left( r\right) }=3rb_{n+1}^{\left( r\right) }-\left(
3r^{2}-1\right) b_{n}^{\left( r\right) }+\left( r^{3}-r+1\right)
b_{n-1}^{\left( r\right) },  \label{2.1}
\end{equation}%
\begin{equation}
c_{n+2}^{\left( r\right) }=3rc_{n+1}^{\left( r\right) }-\left(
3r^{2}-1\right) c_{n}^{\left( r\right) }+\left( r^{3}-r+1\right)
c_{n-1}^{\left( r\right) },  \label{2.2}
\end{equation}%
with initial conditions $b_{0}^{\left( r\right) }={\mathcal{P}}_{0}$, $%
b_{1}^{\left( r\right) }=r{\mathcal{P}}_{0}+{\mathcal{P}}_{1},\
b_{2}^{\left( r\right) }=r^{2}{\mathcal{P}}_{0}+2r{\mathcal{P}}_{1}+{%
\mathcal{P}}_{2}\ $ and $c_{0}^{\left( r\right) }=\mathcal{R}_{0}$, $%
c_{1}^{\left( r\right) }=r\mathcal{R}_{0}+\mathcal{R}_{1},\ c_{2}^{\left(
r\right) }=r^{2}\mathcal{R}_{0}+2r\mathcal{R}_{1}+\mathcal{R}_{2}.$
\end{theorem}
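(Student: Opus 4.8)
The plan is to avoid the combinatorial sums and reduce both recurrences to a single $3\times 3$ matrix identity. Write $A={\mathcal{P}}_1$ and $I={\mathcal{P}}_0$. First I would record the closed forms
\[
b_n^{(r)}=\sum_{i=0}^{n}\binom{n}{i}r^{\,n-i}{\mathcal{P}}_i,\qquad c_n^{(r)}=\sum_{i=0}^{n}\binom{n}{i}r^{\,n-i}{\mathcal{R}}_i,
\]
which encode ``apply the binomial transform $r$ times.'' These follow by induction on $r$ from $b_n^{(r)}=\sum_{k}\binom{n}{k}b_k^{(r-1)}$, the absorption identity $\binom{n}{k}\binom{k}{i}=\binom{n}{i}\binom{n-i}{k-i}$, and $\sum_{k}\binom{n-i}{k-i}(r-1)^{k-i}=r^{\,n-i}$; the stated values $b_1^{(r)}=r{\mathcal{P}}_0+{\mathcal{P}}_1$ and $b_2^{(r)}=r^2{\mathcal{P}}_0+2r{\mathcal{P}}_1+{\mathcal{P}}_2$ are the first two instances.

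Next I would feed in Proposition~\ref{prop1}. From ${\mathcal{P}}_m{\mathcal{P}}_n={\mathcal{P}}_{n+m}$ one gets ${\mathcal{P}}_i=A^i$, and from ${\mathcal{P}}_m{\mathcal{R}}_n={\mathcal{R}}_{n+m}$ with $n=0$ one gets ${\mathcal{R}}_i=A^i{\mathcal{R}}_0$. Substituting these into the closed forms and applying the binomial theorem, which is legitimate since $rI$ is central, yields
\[
b_n^{(r)}=(rI+A)^n,\qquad c_n^{(r)}=(rI+A)^n{\mathcal{R}}_0.
\]
Put $M=rI+A$. Everything now rests on the characteristic relation of $A$: since ${\mathcal{P}}_3={\mathcal{P}}_1+{\mathcal{P}}_0$ by the defining recurrence and ${\mathcal{P}}_3=A^3$, we have $A^3=A+I$, i.e.\ $A^3-A-I=0$.

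Substituting $A=M-rI$ into $A^3-A-I=0$ and expanding $(M-rI)^3=M^3-3rM^2+3r^2M-r^3I$ gives
\[
M^3-3r\,M^2+(3r^2-1)\,M-(r^3-r+1)\,I=0.
\]
Multiplying this identity by $M^{n-1}$, and reading off $b_n^{(r)}=M^n$, produces \eqref{2.1}; multiplying by $M^{n-1}$ and then by ${\mathcal{R}}_0$ on the right produces \eqref{2.2}. The three coefficients are precisely the data of the shifted characteristic polynomial, which is the conceptual reason the binomial transform sends each root $\alpha$ of $x^3-x-1$ to $\alpha+r$.

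I expect the only real work to be in establishing the closed form and the binomial-theorem step, since that is where the transform is genuinely summed; after $b_n^{(r)}=(rI+A)^n$ is available the recurrence is one line, and both sequences are handled at once. A route closer to the stated Lemma is a direct induction on $r$: the base case $r=1$ is exactly Proposition~\ref{prop2}, and the inductive step substitutes the recurrence for $b_n^{(r-1)}$ into Lemma~\ref{lem1}. The obstacle there is the reindexing of the three different sums $\sum_j\binom{n+1}{j}$, $\sum_j\binom{n}{j}$, $\sum_j\binom{n-1}{j}$ against a common index, which is exactly the bookkeeping the matrix argument sidesteps.
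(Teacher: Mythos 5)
Your proof is correct, and it takes a genuinely different route from the paper's. The paper proves (\ref{2.1}) by double induction on $r$ and $n$: the base case $r=1$ is Proposition \ref{prop2}, and the inductive step substitutes the transform's definition into the right-hand side and works through two pages of binomial-sum reindexings, repeatedly invoking Lemma \ref{lem1} to reassemble the pieces --- exactly the bookkeeping you predicted and sidestepped. Each step of your argument checks out: the closed form $b_n^{(r)}=\sum_{i=0}^{n}\binom{n}{i}r^{n-i}{\mathcal{P}}_i$ follows by your induction on $r$ via trinomial revision; ${\mathcal{P}}_i=A^i$ and ${\mathcal{R}}_i=A^i{\mathcal{R}}_0$ are immediate from Proposition \ref{prop1}; $A^3=A+I$ is the case $n=0$ of (\ref{1.03}); and the expansion of $(M-rI)^3-(M-rI)-I$ does give $M^3-3rM^2+(3r^2-1)M-(r^3-r+1)I=0$, from which multiplication by $M^{n-1}$ (and by ${\mathcal{R}}_0$, which commutes with $M$ by Proposition \ref{prop1}) yields both recurrences for $n\geq 1$ --- the only range where the statement makes sense anyway, since the paper's ``$n\geq 0$'' is a slip and its own induction also starts at $n=1$. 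As for what each approach buys: the paper's induction uses only the additive recurrence structure, so it transfers verbatim to the scalar Padovan and Perrin sequences, where no product law like ${\mathcal{P}}_m{\mathcal{P}}_n={\mathcal{P}}_{n+m}$ is available (this is the setting of Falcon's $k$-Fibonacci treatment the authors are following); your route leans on the multiplicative identities special to these matrix sequences, but in exchange it explains the coefficients conceptually (the transform shifts each root of $x^3-x-1$ by $r$, matching the characteristic equation displayed after the theorem) and delivers much of the rest of the paper for free: Theorem \ref{teo3} is $M^nM^m=M^{n+m}$, the Corollary is $c_n^{(r)}={\mathcal{R}}_0M^n$, and the Binet formulas (\ref{2.3})--(\ref{2.4}) come from diagonalizing $M$.
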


\begin{proof}
Similarly the proof of the Lemma 2.1, only the first case, the equation (\ref%
{2.1}) will be proved. We will omit the equation (\ref{2.2}) since the
proofs will not be different.

The proof will be done by induction steps on $r$ and $n$.

First of all, for $r=1,$ from the $i)$ condition of Proposition 1.2, it is
true $b_{n+2}=3b_{n+1}-2b_{n}+b_{n-1}.$

Let us consider definition of iterated binomial transform, then we have 
\begin{equation*}
b_{3}^{\left( r\right) }=\left( r^{3}+1\right) {\mathcal{P}}_{0}+\left(
3r^{2}+1\right) {\mathcal{P}}_{1}+3r{\mathcal{P}}_{2}\ .
\end{equation*}%
The initial conditions are%
\begin{equation*}
b_{0}^{\left( r\right) }={\mathcal{P}}_{0},b_{1}^{\left( r\right) }=r{%
\mathcal{P}}_{0}+{\mathcal{P}}_{1},\ b_{2}^{\left( r\right) }=r^{2}{\mathcal{%
P}}_{0}+2r{\mathcal{P}}_{1}+{\mathcal{P}}_{2}.
\end{equation*}%
Hence, for $n=1,$ the equality (\ref{2.1}) is true, that is $b_{3}^{\left(
r\right) }=3rb_{2}^{\left( r\right) }-\left( 3r^{2}-1\right) b_{1}^{\left(
r\right) }+\left( r^{3}-r+1\right) b_{0}^{\left( r\right) }.$

Actually, by assuming the equation in (\ref{2.1}) holds for all $(r-1,n)$
and $(r,n-1),$ that is,%
\begin{equation*}
b_{n+2}^{\left( r-1\right) }=3\left( r-1\right) b_{n+1}^{\left( r-1\right)
}-\left( 3\left( r-1\right) ^{2}-1\right) b_{n}^{\left( r-1\right) }+\left(
\left( r-1\right) ^{3}-\left( r-1\right) +1\right) b_{n-1}^{\left(
r-1\right) },
\end{equation*}%
and%
\begin{equation*}
b_{n+1}^{\left( r\right) }=3rb_{n}^{\left( r\right) }-\left( 3r^{2}-1\right)
b_{n-1}^{\left( r\right) }+\left( r^{3}-r+1\right) b_{n-2}^{\left( r\right)
}.
\end{equation*}%
Then, we need to show that it is true for $\left( r,n\right) .$ That is, 
\begin{equation*}
b_{n+2}^{\left( r\right) }=3rb_{n+1}^{\left( r\right) }-\left(
3r^{2}-1\right) b_{n}^{\left( r\right) }+\left( r^{3}-r+1\right)
b_{n-1}^{\left( r\right) }.
\end{equation*}%
Let us label $b_{n+2}^{\left( r\right) }=3rb_{n+1}^{\left( r\right) }-\left(
3r^{2}-1\right) b_{n}^{\left( r\right) }+\left( r^{3}-r+1\right)
b_{n-1}^{\left( r\right) }$ by $RHS$. Hence, we can write%
\begin{eqnarray*}
RHS &=&3r\sum_{j=0}^{n+1}\binom{n+1}{j}b_{j}^{\left( r-1\right) }-\left(
3r^{2}-1\right) \sum_{j=0}^{n}\binom{n}{j}b_{j}^{\left( r-1\right) }+\left(
r^{3}-r+1\right) \sum_{j=0}^{n-1}\binom{n-1}{j}b_{j}^{\left( r-1\right) } \\
&=&\left( 3r-3r^{2}+1\right) \sum_{j=0}^{n+1}\binom{n+1}{j}b_{j}^{\left(
r-1\right) }+\left( 3r^{2}-1\right) \sum_{j=0}^{n+1}\binom{n+1}{j}%
b_{j}^{\left( r-1\right) } \\
&&-\left( 3r^{2}-1\right) \sum_{j=0}^{n}\binom{n}{j}b_{j}^{\left( r-1\right)
}+\left( r^{3}-r+1\right) \sum_{j=0}^{n-1}\binom{n-1}{j}b_{j}^{\left(
r-1\right) } \\
&=&\left( 3r-3r^{2}+1\right) \sum_{j=0}^{n+1}\binom{n+1}{j}b_{j}^{\left(
r-1\right) }+\left( 3r^{2}-1\right) \sum_{j=1}^{n+1}\binom{n}{j-1}%
b_{j}^{\left( r-1\right) } \\
&&+\left( r^{3}-r+1\right) \sum_{j=0}^{n-1}\binom{n-1}{j}b_{j}^{\left(
r-1\right) } \\
&=&\left( 3r-3r^{2}+1\right) \sum_{j=0}^{n+1}\binom{n+1}{j}b_{j}^{\left(
r-1\right) }+\left( 3r-3r^{2}+1\right) \sum_{j=1}^{n+1}\binom{n}{j-1}%
b_{j}^{\left( r-1\right) } \\
&&+\left( 6r^{2}-3r-2\right) \sum_{j=1}^{n+1}\binom{n}{j-1}b_{j}^{\left(
r-1\right) }+\left( r^{3}-r+1\right) \sum_{j=0}^{n-1}\binom{n-1}{j}%
b_{j}^{\left( r-1\right) } \\
&=&\left( 3r-3r^{2}+1\right) \sum_{j=0}^{n+1}\binom{n+1}{j}b_{j}^{\left(
r-1\right) }+\left( 3r-3r^{2}+1\right) \sum_{j=0}^{n+1}\binom{n+1}{j}%
b_{j+1}^{\left( r-1\right) } \\
&&+\left( 6r^{2}-3r-2\right) \sum_{j=1}^{n+1}\binom{n}{j-1}b_{j}^{\left(
r-1\right) }+\left( r^{3}-r+1\right) \sum_{j=0}^{n-1}\binom{n-1}{j}%
b_{j}^{\left( r-1\right) } \\
&&-\left( 3r-3r^{2}+1\right) \sum_{j=1}^{n+1}\binom{n}{j-1}b_{j+1}^{\left(
r-1\right) }.
\end{eqnarray*}%
From Lemma 2.1, we have%
\begin{eqnarray*}
RHS &=&\left( 3r-3r^{2}+1\right) b_{n+2}^{\left( r\right) }+\left(
6r^{2}-3r-2\right) \sum_{j=0}^{n}\binom{n}{j}b_{j+1}^{\left( r-1\right) } \\
&&+\left( r^{3}-r+1\right) \sum_{j=0}^{n-1}\binom{n-1}{j}b_{j}^{\left(
r-1\right) }+\left( 3r^{2}-3r-1\right) \sum_{j=0}^{n}\binom{n}{j}%
b_{j+2}^{\left( r-1\right) } \\
&=&\left( 3r-3r^{2}+1\right) b_{n+2}^{\left( r\right) }+\left(
r^{3}-r+1\right) \sum_{j=0}^{n-1}\binom{n-1}{j}b_{j}^{\left( r-1\right) } \\
&&+\left( 3r^{2}-3r\right) \sum_{j=0}^{n}\binom{n}{j}\left( b_{j+1}^{\left(
r-1\right) }+b_{j+2}^{\left( r-1\right) }\right) +\left( 3r^{2}-2\right)
\sum_{j=0}^{n}\binom{n}{j}b_{j+1}^{\left( r-1\right) } \\
&&-\sum_{j=0}^{n}\binom{n}{j}b_{j+2}^{\left( r-1\right) }
\end{eqnarray*}%
\begin{eqnarray*}
RHS &=&b_{n+2}^{\left( r\right) }+\left( 3r-3r^{2}\right) b_{n+1}^{\left(
r\right) }+\left( r^{3}-r+1\right) \sum_{j=0}^{n-1}\binom{n-1}{j}%
b_{j}^{\left( r-1\right) } \\
&&+\left( 3r^{2}-2\right) \sum_{j=1}^{n}\binom{n-1}{j-1}b_{j+1}^{\left(
r-1\right) }+\left( 3r^{2}-2\right) \sum_{j=0}^{n-1}\binom{n-1}{j}%
b_{j+1}^{\left( r-1\right) } \\
&&-\sum_{j=0}^{n}\binom{n}{j}b_{j+2}^{\left( r-1\right) } \\
&=&b_{n+2}^{\left( r\right) }+\left( 3r-3r^{2}\right) b_{n+1}^{\left(
r\right) }+\left( 3r^{2}-3r\right) b_{n}^{\left( r\right) } \\
&&+\left( r^{3}-3r^{2}+2r+1\right) \sum_{j=0}^{n-1}\binom{n-1}{j}%
b_{j}^{\left( r-1\right) }+\left( 3r-2\right) \sum_{j=0}^{n-1}\binom{n-1}{j}%
b_{j+1}^{\left( r-1\right) } \\
&&+\left( 3r^{2}-2\right) \sum_{j=0}^{n-1}\binom{n-1}{j}b_{j+2}^{\left(
r-1\right) }-\sum_{j=0}^{n}\binom{n}{j}b_{j+2}^{\left( r-1\right) } \\
&=&b_{n+2}^{\left( r\right) }+\left( 3r-3r^{2}\right) \left( b_{n+1}^{\left(
r\right) }-b_{n}^{\left( r\right) }\right) \\
&&+\sum_{j=0}^{n-1}\binom{n-1}{j}\left[ 
\begin{array}{c}
\left( r^{3}-3r^{2}+2r+1\right) b_{j}^{\left( r-1\right) }+\left(
3r-2\right) b_{j+1}^{\left( r-1\right) } \\ 
++\left( 3r^{2}-3\right) b_{j+2}^{\left( r-1\right) }-b_{j+3}^{\left(
r-1\right) }%
\end{array}%
\right] .
\end{eqnarray*}%
Afterward, by taking account assumption and Lemma 2.1, we deduce%
\begin{eqnarray*}
RHS &=&b_{n+2}^{\left( r\right) }+\left( 3r-3r^{2}\right) \left(
b_{n+1}^{\left( r\right) }-b_{n}^{\left( r\right) }\right) \\
&&-\left( 3r-3r^{2}\right) \sum_{j=0}^{n-1}\binom{n-1}{j}\left(
b_{j+1}^{\left( r-1\right) }+b_{j+2}^{\left( r-1\right) }\right) \\
&=&b_{n+2}^{\left( r\right) }
\end{eqnarray*}%
which is completed the proof of this theorem.
\end{proof}

The characteristic equation of sequences $\left\{ b_{n}^{\left( r\right)
}\right\} $ and $\left\{ c_{n}^{\left( r\right) }\right\} $ in (\ref{2.1})
and (\ref{2.2}) is \linebreak $\lambda ^{3}-3r\lambda ^{2}+\left(
3r^{2}-1\right) \lambda -\left( r^{3}-r+1\right) =0.$ Let be $\lambda _{1},$ 
$\lambda _{2}$ and $\lambda _{3}$ the roots of this equation. Then, binet's
formulas of sequences $\left\{ b_{n}^{\left( r\right) }\right\} $ and $%
\left\{ c_{n}^{\left( r\right) }\right\} $ can be expressed as%
\begin{equation}
b_{n}^{\left( r\right) }=X_{1}\lambda _{1}^{n}+Y_{1}\lambda
_{2}^{n}+Z_{1}\lambda _{3}^{n},  \label{2.3}
\end{equation}%
and%
\begin{equation}
c_{n}^{\left( r\right) }=X_{2}\lambda _{1}^{n}+Y_{2}\lambda
_{2}^{n}+Z_{2}\lambda _{3}^{n},  \label{2.4}
\end{equation}%
where%
\begin{eqnarray*}
X_{1} &=&\frac{\lambda _{1}b_{2}^{\left( r\right) }-\left( 3r\lambda
_{1}-\lambda _{1}^{2}\right) b_{1}^{\left( r\right) }+\left(
r^{3}-r+1\right) b_{0}^{\left( r\right) }}{\lambda _{1}\left( \lambda
_{1}-\lambda _{2}\right) \left( \lambda _{1}-\lambda _{3}\right) }, \\
Y_{1} &=&\frac{\lambda _{2}b_{2}^{\left( r\right) }-\left( 3r\lambda
_{2}-\lambda _{2}^{2}\right) b_{1}^{\left( r\right) }+\left(
r^{3}-r+1\right) b_{0}^{\left( r\right) }}{\lambda _{2}\left( \lambda
_{2}-\lambda _{1}\right) \left( \lambda _{2}-\lambda _{3}\right) }, \\
Z_{1} &=&\frac{\lambda _{3}b_{2}^{\left( r\right) }-\left( 3r\lambda
_{3}-\lambda _{3}^{2}\right) b_{1}^{\left( r\right) }+\left(
r^{3}-r+1\right) b_{0}^{\left( r\right) }}{\lambda _{3}\left( \lambda
_{3}-\lambda _{1}\right) \left( \lambda _{3}-\lambda _{2}\right) },
\end{eqnarray*}%
and 
\begin{eqnarray*}
X_{2} &=&\frac{\lambda _{1}c_{2}^{\left( r\right) }-\left( 3r\lambda
_{1}-\lambda _{1}^{2}\right) c_{1}^{\left( r\right) }+\left(
r^{3}-r+1\right) c_{0}^{\left( r\right) }}{\lambda _{1}\left( \lambda
_{1}-\lambda _{2}\right) \left( \lambda _{1}-\lambda _{3}\right) }, \\
Y_{2} &=&\frac{\lambda _{2}c_{2}^{\left( r\right) }-\left( 3r\lambda
_{2}-\lambda _{2}^{2}\right) c_{1}^{\left( r\right) }+\left(
r^{3}-r+1\right) c_{0}^{\left( r\right) }}{\lambda _{2}\left( \lambda
_{2}-\lambda _{1}\right) \left( \lambda _{2}-\lambda _{3}\right) }, \\
Z_{2} &=&\frac{\lambda _{3}c_{2}^{\left( r\right) }-\left( 3r\lambda
_{3}-\lambda _{3}^{2}\right) c_{1}^{\left( r\right) }+\left(
r^{3}-r+1\right) c_{0}^{\left( r\right) }}{\lambda _{3}\left( \lambda
_{3}-\lambda _{1}\right) \left( \lambda _{3}-\lambda _{2}\right) }.
\end{eqnarray*}%
\qquad \qquad

Now, we give the sums of iterated binomial transforms for the Padovan and
Perrin matrix sequences.

\begin{theorem}
Sums of $\ $sequences $\left\{ b_{n}^{\left( r\right) }\right\} $ and $%
\left\{ c_{n}^{\left( r\right) }\right\} $ are

\begin{description}
\item[$i)$] $\sum_{i=0}^{n-1}b_{i}^{\left( r\right) }=\dfrac{b_{n+1}^{\left(
r\right) }+\left( 1-3r\right) b_{n}^{\left( r\right) }+\left(
r^{3}-r+1\right) b_{n-1}^{\left( r\right) }+\left( 3r-1\right) b_{0}^{\left(
r\right) }-b_{1}^{\left( r\right) }}{r^{3}+2r}$

\item[$ii)$] $\sum_{i=0}^{n-1}c_{i}^{\left( r\right) }=\dfrac{%
c_{n+1}^{\left( r\right) }+\left( 1-3r\right) c_{n}^{\left( r\right)
}+\left( r^{3}-r+1\right) c_{n-1}^{\left( r\right) }+\left( 3r-1\right)
c_{0}^{\left( r\right) }-c_{1}^{\left( r\right) }}{r^{3}+2r}.$
\end{description}
\end{theorem}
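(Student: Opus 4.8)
The plan is to establish $i)$ by summing the three–term recurrence (\ref{2.1}) and then solving the resulting equation for the partial sum; part $ii)$ is identical word for word with $c_n^{(r)}$ in place of $b_n^{(r)}$, since $\{c_n^{(r)}\}$ satisfies the same recurrence (\ref{2.2}) with the same coefficients, so I would prove $i)$ in full and remark that $ii)$ follows \emph{mutatis mutandis}. Write $S=\sum_{i=0}^{n-1}b_i^{(r)}$ for the quantity to be evaluated. The engine of the argument is that (\ref{2.1}), read for a running index $i\geq 1$, says $b_{i+2}^{(r)}-3r\,b_{i+1}^{(r)}+(3r^2-1)b_i^{(r)}-(r^3-r+1)b_{i-1}^{(r)}=0$, and summing this single identity over $i=1,2,\dots,n-1$ converts four shifted copies of the sequence into four shifted copies of $S$.

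The key steps, in order, are as follows. First I would reindex each of the four sums so that it is expressed through $S$ and a few boundary matrices, namely $\sum_{i=1}^{n-1}b_{i-1}^{(r)}=S-b_{n-1}^{(r)}$, $\sum_{i=1}^{n-1}b_i^{(r)}=S-b_0^{(r)}$, $\sum_{i=1}^{n-1}b_{i+1}^{(r)}=S-b_0^{(r)}-b_1^{(r)}+b_n^{(r)}$, and $\sum_{i=1}^{n-1}b_{i+2}^{(r)}=S-b_0^{(r)}-b_1^{(r)}-b_2^{(r)}+b_n^{(r)}+b_{n+1}^{(r)}$. Substituting these into the summed recurrence yields one linear matrix equation whose only unknown is $S$: the coefficient of $S$ is $1-3r+(3r^2-1)-(r^3-r+1)$, i.e. the value $p(1)$ of the characteristic polynomial $p(\lambda)=\lambda^3-3r\lambda^2+(3r^2-1)\lambda-(r^3-r+1)$, while all the remaining terms assemble into a fixed combination of the ``late'' matrices $b_{n+1}^{(r)},b_n^{(r)},b_{n-1}^{(r)}$ and the ``early'' ones $b_0^{(r)},b_1^{(r)},b_2^{(r)}$. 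Dividing by this coefficient isolates $S$ and delivers the closed form.

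I expect the only real obstacle to be the bookkeeping in this last collection of terms, and in particular pinning down the exact scalar coefficient of $S$: the natural computation produces $p(1)$ in the denominator, so the delicate point is to check that numerator and denominator really simplify to the displayed expression (the claimed denominator $r^3+2r$ together with the early-term combination $(3r-1)b_0^{(r)}-b_1^{(r)}$). The cleanest safeguards I would build in are two independent sanity checks: substitute the scalar geometric sequence $b_i^{(r)}=\lambda^i$ for a characteristic root $\lambda$, for which $S=\frac{\lambda^n-1}{\lambda-1}$ must be reproduced, and verify the base case $n=1$, where $S=b_0^{(r)}$ must come out. Any discrepancy is localized immediately by these tests. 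As alternatives to the telescoping argument, one could verify the stated identity directly by induction on $n$, feeding (\ref{2.1}) into the inductive step, or sum the Binet expansion (\ref{2.3}) root by root via $\sum_{i=0}^{n-1}\lambda_k^{\,i}=\frac{\lambda_k^{\,n}-1}{\lambda_k-1}$ and recombine; both routes meet the same algebraic core.
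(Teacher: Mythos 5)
Your telescoping strategy is sound, and it is genuinely different from the paper's route: the paper instead sums the Binet form (\ref{2.4}) as three geometric series $\sum_{i=0}^{n-1}\lambda_k^i=\frac{\lambda_k^n-1}{\lambda_k-1}$ and then appeals to the root relations together with a claimed identity $c_{-1}^{(r)}=0$. All four of your reindexings are correct, and the coefficient of $S$ is indeed $p(1)$ for $p(\lambda)=\lambda^3-3r\lambda^2+(3r^2-1)\lambda-(r^3-r+1)$. But the one step you deferred --- ``check that numerator and denominator really simplify to the displayed expression'' --- is exactly where the attempt founders, and it founders irreparably. Carrying your computation to the end gives
\begin{equation*}
\left( r^{3}-3r^{2}+2r+1\right) S=b_{n+1}^{\left( r\right) }+\left(
1-3r\right) b_{n}^{\left( r\right) }+\left( r^{3}-r+1\right)
b_{n-1}^{\left( r\right) }+\left( 3r-3r^{2}\right) b_{0}^{\left( r\right)
}+\left( 3r-1\right) b_{1}^{\left( r\right) }-b_{2}^{\left( r\right) },
\end{equation*}
since $-p(1)=r^{3}-3r^{2}+2r+1$ and the boundary matrices collect as shown. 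This does not match the theorem: the printed denominator $r^{3}+2r$ and early block $(3r-1)b_{0}^{(r)}-b_{1}^{(r)}$ are different, and no further identity reconciles them, because the statement as printed is false. Your own base-case safeguard detects this: at $n=1$, $r=1$ the claimed right-hand side is $\frac{1}{3}\left( b_{2}-3b_{1}+3b_{0}\right) $, which equals $S=b_{0}$ only if $b_{2}=3b_{1}$, contradicting the matrices in Proposition \ref{prop2}; likewise at $n=2$, $r=1$ the claimed value $\frac{1}{3}\left( b_{3}-2b_{2}+2b_{0}\right) $ differs from $b_{0}+b_{1}$ (substitute $b_{3}=3b_{2}-2b_{1}+b_{0}$). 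So your proposal, completed honestly, proves the corrected identity above (with denominator $r^{3}-3r^{2}+2r+1=r(r-1)(r-2)+1\neq 0$ for $r\geq 1$), not the stated one.

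For comparison, the paper's own proof has the same defect seen from the other side. Summing (\ref{2.4}) produces the denominator $\left( \lambda_{1}-1\right) \left( \lambda _{2}-1\right) \left( \lambda _{3}-1\right) $, which by Vieta's formulas for $p$ equals $\left( r^{3}-r+1\right) -\left(3r^{2}-1\right) +3r-1=r^{3}-3r^{2}+2r+1$; the printed $r^{3}+2r=\left(r^{3}-r+1\right) +3r-1$ has simply dropped the middle elementary symmetric function $3r^{2}-1$. Moreover the auxiliary identity $c_{-1}^{\left( r\right)}=0$ invoked there is false: extending (\ref{2.2}) backwards at $r=1$ gives $c_{-1}^{\left( 1\right) }=c_{2}-3c_{1}+2c_{0}\neq 0$ for the matrices of Proposition \ref{prop2}. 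In short, your engine is the right one, and your two built-in tests (the scalar geometric substitution and the $n=1$ check) are precisely what expose the error; but the final matching step must be replaced by the corrected numerator and denominator displayed above. Note also that the telescoping argument requires $n\geq 2$, so $n=1$ should be verified separately --- the corrected formula does pass it, since the $b_{n}$, $b_{n-1}$ terms then cancel against the early block leaving $\left( r^{3}-3r^{2}+2r+1\right) b_{0}^{\left( r\right) }$.
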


\begin{proof}
We omit Padovan case since the proof be quite similar. By considering
equation (\ref{2.4}), we have%
\begin{equation*}
\sum\limits_{i=0}^{n-1}c_{i}^{\left( r\right)
}=\sum\limits_{i=0}^{n-1}\left( X_{2}\lambda _{1}^{n}+Y_{2}\lambda
_{2}^{n}+Z_{2}\lambda _{3}^{n}\right) .
\end{equation*}%
Then we obtain%
\begin{equation*}
\sum\limits_{i=0}^{n-1}c_{i}^{\left( r\right) }=X_{2}\left( \frac{\lambda
_{1}^{n}-1}{\lambda _{1}-1}\right) +Y_{2}\left( \frac{\lambda _{2}^{n}-1}{%
\lambda _{2}-1}\right) +Z_{2}\left( \frac{\lambda _{3}^{n}-1}{\lambda _{3}-1}%
\right) .
\end{equation*}%
Afterward, by taking account equations $c_{-1}^{\left( r\right) }=0,\
\lambda _{1}\cdot \lambda _{2}\cdot \lambda _{3}=r^{3}-r+1$ and $\lambda
_{1}+\lambda _{2}+\lambda _{3}=3r,$\ we conclude 
\begin{equation*}
\sum\limits_{i=0}^{n-1}c_{i}^{\left( r\right) }=\dfrac{c_{n+1}^{\left(
r\right) }+\left( 1-3r\right) c_{n}^{\left( r\right) }+\left(
r^{3}-r+1\right) c_{n-1}^{\left( r\right) }+\left( 3r-1\right) c_{0}^{\left(
r\right) }-c_{1}^{\left( r\right) }}{r^{3}+2r}.
\end{equation*}
\end{proof}

\begin{theorem}
\label{teo2}The generating functions of the iterated binomial transforms for 
$\left( {\mathcal{P}}_{n}\right) $ and $\left( {\mathcal{R}}_{n}\right) $\
are

\begin{description}
\item[$i)$] $\sum\limits_{i=0}^{\infty }b_{i}^{\left( r\right) }x^{i}=\dfrac{%
b_{0}^{\left( r\right) }+\left( b_{1}^{\left( r\right) }-3rb_{0}^{\left(
r\right) }\right) x+\left( b_{2}^{\left( r\right) }-3rb_{1}^{\left( r\right)
}+\left( 3r^{2}-1\right) b_{0}^{\left( r\right) }\right) x^{2}}{1-3rx+\left(
3r^{2}-1\right) x^{2}-\left( r^{3}-r+1\right) x^{3}},$

\item[$ii)$] $\sum\limits_{i=0}^{\infty }c_{i}^{\left( r\right) }x^{i}=%
\dfrac{c_{0}^{\left( r\right) }+\left( c_{1}^{\left( r\right)
}-3rc_{0}^{\left( r\right) }\right) x+\left( c_{2}^{\left( r\right)
}-3rc_{1}^{\left( r\right) }+\left( 3r^{2}-1\right) c_{0}^{\left( r\right)
}\right) x^{2}}{1-3rx+\left( 3r^{2}-1\right) x^{2}-\left( r^{3}-r+1\right)
x^{3}}.$
\end{description}
\end{theorem}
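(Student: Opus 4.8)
The plan is to apply the standard generating-function technique for linearly recurrent sequences: multiply the unknown generating function by the reciprocal of the characteristic polynomial and let the three-term recurrence of Theorem \ref{teo1} annihilate all but finitely many coefficients. Since the Perrin case $ii)$ is obtained from $i)$ by replacing every $b$ with the corresponding $c$ (the two sequences obey the same recurrence by (\ref{2.1}) and (\ref{2.2})), I would prove only $i)$ and note that $ii)$ follows \emph{mutatis mutandis}.

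First I would set $g(x)=\sum_{i=0}^{\infty}b_{i}^{\left( r\right) }x^{i}$ and introduce the denominator polynomial $D(x)=1-3rx+\left( 3r^{2}-1\right) x^{2}-\left( r^{3}-r+1\right) x^{3}$, which is precisely the reciprocal of the characteristic polynomial $\lambda ^{3}-3r\lambda ^{2}+\left( 3r^{2}-1\right) \lambda -\left( r^{3}-r+1\right)$ recorded just after (\ref{2.2}). The heart of the argument is to form the Cauchy product $D(x)g(x)$ and read off the coefficient of each power $x^{n}$. Distributing the four terms of $D(x)$ against the series, the coefficient of $x^{n}$ for $n\geq 3$ is
\[
b_{n}^{\left( r\right) }-3r\,b_{n-1}^{\left( r\right) }+\left( 3r^{2}-1\right) b_{n-2}^{\left( r\right) }-\left( r^{3}-r+1\right) b_{n-3}^{\left( r\right) },
\]
which is exactly the recurrence (\ref{2.1}) with its index shifted by two, hence equal to the zero matrix. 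Thus every coefficient of degree at least three disappears.

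What remains are the three lowest-order terms. A direct bookkeeping of the $x^{0}$, $x^{1}$ and $x^{2}$ contributions produces $b_{0}^{\left( r\right) }$, then $b_{1}^{\left( r\right) }-3r\,b_{0}^{\left( r\right) }$, and then $b_{2}^{\left( r\right) }-3r\,b_{1}^{\left( r\right) }+\left( 3r^{2}-1\right) b_{0}^{\left( r\right) }$, so that $D(x)g(x)$ equals precisely the numerator claimed in the statement. Dividing through by $D(x)$ then yields the asserted closed form, and one may if desired substitute the explicit initial matrices $b_{0}^{\left( r\right) },b_{1}^{\left( r\right) },b_{2}^{\left( r\right) }$ from Theorem \ref{teo1} to display the numerator fully.

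I expect no genuine obstacle here; the only point demanding care is the index arithmetic, namely verifying that the telescoping of coefficients begins exactly at $x^{3}$ and that the recurrence of Theorem \ref{teo1} is invoked only in its valid range, so that for every $n\geq 3$ all three indices $n-1,n-2,n-3$ are nonnegative and no phantom term $b_{-1}^{\left( r\right) }$ is ever needed. I would also remark once that, because the coefficients are matrices, all these manipulations are legitimate: the scalars $3r$, $3r^{2}-1$ and $r^{3}-r+1$ commute with the matrices and matrix addition is commutative, so the formal power series identity holds coefficientwise without any ordering difficulty.
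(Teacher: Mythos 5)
Your proposal is correct and essentially the same as the paper's argument: the paper also proves only part $i)$, substitutes the recurrence (\ref{2.1}) into the tail $\sum_{i\geq 3}b_{i}^{\left( r\right)}x^{i}$ and rearranges, which is exactly your computation of $D(x)g(x)$ read in the other direction, with the same three surviving low-order terms forming the numerator. Your added remarks on the valid index range and on scalar--matrix commutativity are sound but do not change the route.
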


\begin{proof}

\begin{description}
\item[$i)$] Assume that $b\left( x,r\right) =\sum\limits_{i=0}^{\infty
}b_{i}^{\left( r\right) }x^{i}$ is the generating function of the iterated
binomial transform for $\left( {\mathcal{P}}_{n}\right) $. From Theorem \ref%
{teo1}, we obtain 
\begin{eqnarray*}
b\left( x,r\right) &=&b_{0}^{\left( r\right) }+b_{1}^{\left( r\right)
}x+b_{2}^{\left( r\right) }x^{2} \\
&&+\sum\limits_{i=3}^{\infty }\left( 3rb_{i-1}^{\left( r\right) }-\left(
3r^{2}-1\right) b_{i-2}^{\left( r\right) }+\left( r^{3}-r+1\right)
b_{i-3}^{\left( r\right) }\right) x^{i} \\
&=&b_{0}^{\left( r\right) }+b_{1}^{\left( r\right) }x+b_{2}^{\left( r\right)
}x^{2}+3rx\sum\limits_{i=3}^{\infty }b_{i-1}^{\left( r\right) }x^{i-1} \\
&&-\left( 3r^{2}-1\right) x^{2}\sum\limits_{i=3}^{\infty }b_{i-2}^{\left(
r\right) }x^{i-2}+\left( r^{3}-r+1\right) x^{3}\sum\limits_{i=3}^{\infty
}b_{i-3}^{\left( r\right) }x^{i-3} \\
&=&b_{0}^{\left( r\right) }+b_{1}^{\left( r\right) }x+b_{2}^{\left( r\right)
}x^{2}+3rx\sum\limits_{i=0}^{\infty }b_{i}^{\left( r\right) }x^{i}-3rx\left(
b_{0}^{\left( r\right) }+b_{1}^{\left( r\right) }x\right) \\
&&-\left( 3r^{2}-1\right) x^{2}\sum\limits_{i=0}^{\infty }b_{i}^{\left(
r\right) }x^{i}+\left( 3r^{2}-1\right) x^{2}b_{0}^{\left( r\right) }+\left(
r^{3}-r+1\right) x^{3}\sum\limits_{i=0}^{\infty }b_{i}^{\left( r\right)
}x^{i}.
\end{eqnarray*}%
Now rearrangement the equation implies that 
\begin{equation*}
b\left( x,r\right) =\dfrac{b_{0}^{\left( r\right) }+\left( b_{1}^{\left(
r\right) }-3rb_{0}^{\left( r\right) }\right) x+\left( b_{2}^{\left( r\right)
}-3rb_{1}^{\left( r\right) }+\left( 3r^{2}-1\right) b_{0}^{\left( r\right)
}\right) x^{2}}{1-3rx+\left( 3r^{2}-1\right) x^{2}-\left( r^{3}-r+1\right)
x^{3}},
\end{equation*}%
which equal to the $\sum\limits_{i=0}^{\infty }b_{i}^{\left( r\right) }x^{i}$
in theorem. Hence the result.

\item[$ii)$] The proof of generating function of the iterated binomial
transform for Perrin matrix sequences can see by taking account proof of $i)$%
.
\end{description}
\end{proof}

\section{The relationships between new iterated binomial transforms}

In this section, we present the relationships between the iterated binomial
transform of the Padovan matrix sequence and iterated binomial transform of
the\ Perrin matrix sequence.

\begin{theorem}
\label{teo3}For $n,m\geq 0,$ we have
\end{theorem}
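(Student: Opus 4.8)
The strategy is to collapse each product onto the two multiplicative rules of Proposition \ref{prop1}, and the device that makes this painless is a closed form for the iterated transform. First I would record that applying the binomial transform $r$ times to $({\mathcal{P}}_n)$ yields
\begin{equation*}
b_n^{(r)} = \sum_{i=0}^n \binom{n}{i} r^{n-i} {\mathcal{P}}_i,
\end{equation*}
which reproduces the listed values $b_0^{(r)},b_1^{(r)},b_2^{(r)}$ and is confirmed either by induction on $r$ (the plain transform is the case $r=1$, and one further pass advances $r$ by one via $\binom{n}{j}\binom{j}{i}=\binom{n}{i}\binom{n-i}{j-i}$ together with the binomial theorem) or by checking that the right-hand side obeys the recurrence (\ref{2.1}). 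The decisive observation is that Proposition \ref{prop1} forces ${\mathcal{P}}_i={\mathcal{P}}_1^i$, since ${\mathcal{P}}_0$ is the identity and ${\mathcal{P}}_m{\mathcal{P}}_n={\mathcal{P}}_{n+m}$; as $r{\mathcal{P}}_0$ is a scalar matrix and hence central, the binomial theorem rewrites the closed form as
\begin{equation*}
b_n^{(r)} = \left( r{\mathcal{P}}_0 + {\mathcal{P}}_1 \right)^n .
\end{equation*}
The same computation with ${\mathcal{R}}_i = {\mathcal{R}}_0{\mathcal{P}}_i$ (again from Proposition \ref{prop1}) gives $c_n^{(r)} = {\mathcal{R}}_0\left( r{\mathcal{P}}_0 + {\mathcal{P}}_1 \right)^n = {\mathcal{R}}_0\, b_n^{(r)}$.

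Once the transforms are exhibited as powers of the single matrix $r{\mathcal{P}}_0+{\mathcal{P}}_1$, the asserted identities drop out. For the Padovan--Padovan relation,
\begin{equation*}
b_n^{(r)} b_m^{(r)} = \left( r{\mathcal{P}}_0+{\mathcal{P}}_1 \right)^n \left( r{\mathcal{P}}_0+{\mathcal{P}}_1 \right)^m = \left( r{\mathcal{P}}_0+{\mathcal{P}}_1 \right)^{n+m} = b_{n+m}^{(r)} .
\end{equation*}
For the mixed relations I would use that ${\mathcal{R}}_0$ commutes with ${\mathcal{P}}_1$, because ${\mathcal{P}}_1{\mathcal{R}}_0$ and ${\mathcal{R}}_0{\mathcal{P}}_1$ both equal ${\mathcal{R}}_1$ by Proposition \ref{prop1}, and therefore with every power of $r{\mathcal{P}}_0+{\mathcal{P}}_1$; consequently
\begin{equation*}
b_n^{(r)} c_m^{(r)} = c_n^{(r)} b_m^{(r)} = {\mathcal{R}}_0\, b_n^{(r)} b_m^{(r)} = {\mathcal{R}}_0\, b_{n+m}^{(r)} = c_{n+m}^{(r)} ,
\end{equation*}
which is the iterated analogue of Proposition \ref{prop2}.

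The only genuine obstacle is the first step: one must actually secure the closed form (equivalently, the power representation) before everything else becomes bookkeeping. If instead one argued directly from the Binet formulas (\ref{2.3})--(\ref{2.4}), the product $b_n^{(r)}b_m^{(r)}$ would carry cross terms like $\lambda_1^n\lambda_2^m$, and eliminating them would require proving that the coefficient matrices $X_1,Y_1,Z_1$ are orthogonal idempotents ($X_1^2=X_1$, $X_1Y_1=0$, and so on), i.e. the spectral projections of $r{\mathcal{P}}_0+{\mathcal{P}}_1$. That is strictly more effort than deriving $b_n^{(r)}=\left(r{\mathcal{P}}_0+{\mathcal{P}}_1\right)^n$, so I would keep to the power representation, invoking Vandermonde's convolution $\sum_i\binom{n}{i}\binom{m}{k-i}=\binom{n+m}{k}$ only if I chose to prove the product combinatorially through ${\mathcal{P}}_i{\mathcal{P}}_j={\mathcal{P}}_{i+j}$ instead.
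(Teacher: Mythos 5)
Your proposal is correct, but it takes a genuinely different route from the paper. The paper proves the theorem by induction on $r$: the base case $r=1$ is Proposition \ref{prop2}-$iii)$, and the inductive step expands $b_n^{(r+1)}b_m^{(r+1)}$ as a double binomial sum, merges $b_i^{(r)}b_j^{(r)}=b_{i+j}^{(r)}$ via the induction hypothesis, and collapses the resulting double sum with exactly the Vandermonde convolution you relegate to a footnote. You instead establish the closed form $b_n^{(r)}=\sum_{i=0}^{n}\binom{n}{i}r^{n-i}{\mathcal{P}}_i$ (your induction on $r$ via $\binom{n}{j}\binom{j}{i}=\binom{n}{i}\binom{n-i}{j-i}$ and the binomial theorem is sound, and the formula matches the paper's listed values of $b_0^{(r)},b_1^{(r)},b_2^{(r)}$), then use ${\mathcal{P}}_0=I$ and ${\mathcal{P}}_i={\mathcal{P}}_1^{\,i}$ from Proposition \ref{prop1} to get the power representation $b_n^{(r)}=\left(r{\mathcal{P}}_0+{\mathcal{P}}_1\right)^n$ and $c_n^{(r)}={\mathcal{R}}_0\,b_n^{(r)}$, after which all three identities are laws of exponents plus the commutation ${\mathcal{P}}_1{\mathcal{R}}_0={\mathcal{R}}_0{\mathcal{P}}_1={\mathcal{R}}_1$. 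What each approach buys: the paper's induction is self-contained at the level of the transform's definition and needs no structural information about the matrices beyond the $r=1$ case; your argument exploits the multiplicative structure of the matrix sequences more aggressively and is shorter, makes the hypothesis $n\leq m$ visibly superfluous (everything in sight commutes), and delivers the paper's later Corollary $c_n^{(r)}={\mathcal{R}}_0 b_n^{(r)}$ (obtained there from Theorem \ref{teo5} with $m=0$) as a byproduct at the outset — indeed the power representation would also streamline Theorems \ref{teo4} and \ref{teo5}. Your closing remark about the Binet route is apt: the cross terms $\lambda_1^n\lambda_2^m$ do make (\ref{2.3})--(\ref{2.4}) the wrong tool here, which is presumably why the paper avoids it as well.
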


\begin{description}
\item[$i)$] $b_{n}^{\left( r\right) }b_{m}^{\left( r\right)
}=b_{n+m}^{\left( r\right) },$ where $n\leq m,$

\item[$ii)$] $b_{n}^{\left( r\right) }c_{m}^{\left( r\right) }=c_{m}^{\left(
r\right) }b_{n}^{\left( r\right) }=c_{n+m}^{\left( r\right) },$
\end{description}

\begin{proof}

\begin{description}
\item[$i)$] The proof will be done by induction step on $r$. First of all,
for $r=1,$ from the $iii)$ condition of Proposition 1.2, it is true $%
b_{n}b_{m}=b_{n+m}.$

Actually, by assuming the equation in $i)$ holds for all $r,$ that is,%
\begin{eqnarray*}
b_{n}^{\left( r\right) }b_{m}^{\left( r\right) } &=&\sum\limits_{i=0}^{n}%
\binom{n}{i}b_{i}^{\left( r-1\right) }+\sum\limits_{j=0}^{m}\binom{m}{j}%
b_{j}^{\left( r-1\right) } \\
&=&\sum\limits_{k=0}^{n+m}\binom{n+m}{k}b_{k}^{\left( r-1\right)
}=b_{n+m}^{\left( r\right) }.
\end{eqnarray*}%
Then, we need to show that it is true for $r+1.$ That is, From definition of
iterated binomial transform, we have 
\begin{eqnarray*}
b_{n}^{\left( r+1\right) }b_{m}^{\left( r+1\right) } &=&\left(
\sum\limits_{i=0}^{n}\binom{n}{i}b_{i}^{\left( r\right) }\right) \left(
\sum\limits_{j=0}^{m}\binom{m}{j}b_{j}^{\left( r\right) }\right) \\
&=&\left( \sum\limits_{i=0}^{n}\binom{n}{i}\sum\limits_{k=0}^{i}\binom{i}{k}%
b_{k}^{\left( r-1\right) }\right) \left( \sum\limits_{j=0}^{m}\binom{m}{j}%
\sum\limits_{l=0}^{j}\binom{j}{l}b_{l}^{\left( r-1\right) }\right) .
\end{eqnarray*}%
And, by considering assumption, we obtain 
\begin{eqnarray*}
b_{n}^{\left( r+1\right) }b_{m}^{\left( r+1\right) }
&=&\sum\limits_{i=0}^{n}\sum\limits_{j=0}^{m}\binom{n}{i}\binom{m}{j}%
b_{i+j}^{\left( r\right) } \\
&=&\binom{n}{0}\binom{m}{0}b_{0}^{\left( r\right) }+\binom{n}{0}\binom{m}{1}%
b_{1}^{\left( r\right) }+\cdots +\binom{n}{0}\binom{m}{m}b_{m}^{\left(
r\right) } \\
&&+\binom{n}{1}\binom{m}{0}b_{1}^{\left( r\right) }+\binom{n}{1}\binom{m}{1}%
b_{2}^{\left( r\right) }+\cdots +\binom{n}{1}\binom{m}{m}b_{m+1}^{\left(
r\right) }+ \\
&&\vdots \\
&&+\binom{n}{n}\binom{m}{0}b_{n}^{\left( r\right) }+\binom{n}{n}\binom{m}{1}%
b_{n+1}^{\left( r\right) }+\cdots +\binom{n}{n}\binom{m}{m}b_{n+m}^{\left(
r\right) } \\
&=&\binom{n}{0}\binom{m}{0}b_{0}^{\left( r\right) }+\left[ \binom{n}{0}%
\binom{m}{1}+\binom{n}{1}\binom{m}{0}\right] b_{1}^{\left( r\right) } \\
&&+\left[ \binom{n}{0}\binom{m}{2}+\binom{n}{1}\binom{m}{1}+\binom{n}{2}%
\binom{m}{0}\right] b_{2}^{\left( r\right) }+\cdots \\
&&+\left[ \binom{n}{0}\binom{m}{k}+\binom{n}{1}\binom{m}{k-1}+\cdots +\binom{%
n}{k}\binom{m}{0}\right] b_{k}^{\left( r\right) }+\cdots \\
&&+\binom{n}{n}\binom{m}{m}b_{n+m}^{\left( r\right) }.
\end{eqnarray*}%
By taking account Vandermonde identity $\sum\limits_{j=0}^{k}\binom{x}{j}%
\binom{y}{k-j}=\binom{x+y}{k},$ we get%
\begin{eqnarray*}
b_{n}^{\left( r+1\right) }b_{m}^{\left( r+1\right) } &=&\binom{n+m}{0}%
b_{0}^{\left( r\right) }+\binom{n+m}{1}b_{1}^{\left( r\right) }+\binom{n+m}{2%
}b_{2}^{\left( r\right) }+\cdots \\
&&+\binom{n+m}{k}b_{k}^{\left( r\right) }+\cdots +\binom{n+m}{n+m}%
b_{n+m}^{\left( r\right) } \\
&=&\sum\limits_{i=0}^{n+m}\binom{n+m}{i}b_{i}^{\left( r\right) } \\
&=&b_{n+m}^{\left( r+1\right) }.
\end{eqnarray*}

\item[$ii)$] The proof is similar proof of $i)$.
\end{description}
\end{proof}

\begin{theorem}
\label{teo4}The properties of the transforms $\left\{ b_{n}^{\left( r\right)
}\right\} $ and $\left\{ c_{n}^{\left( r\right) }\right\} $ would be
illustrated by following way:
\end{theorem}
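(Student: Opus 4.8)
The plan is to derive every relationship in Theorem~\ref{teo4} from a single structural identity, namely that the Perrin transform is a fixed right-multiple of the Padovan transform:
\begin{equation*}
c_{n}^{\left( r\right) }=b_{n}^{\left( r\right) }c_{0}^{\left( r\right) }=c_{0}^{\left( r\right) }b_{n}^{\left( r\right) },\qquad c_{0}^{\left( r\right) }=\mathcal{R}_{0}.
\end{equation*}
The key input is Proposition~\ref{prop1}, which at the level of the underlying matrix sequences gives $\mathcal{R}_{n+m}=\mathcal{P}_{m}\mathcal{R}_{n}=\mathcal{R}_{n}\mathcal{P}_{m}$; in particular $\mathcal{R}_{k}=\mathcal{P}_{k}\mathcal{R}_{0}$ and every $\mathcal{P}_{k}$ commutes with $\mathcal{R}_{0}$. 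Once the displayed identity is in hand, all remaining assertions become formal consequences of it together with the product rules of Theorem~\ref{teo3}.

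First I would verify the identity on the three initial terms. Substituting $\mathcal{R}_{k}=\mathcal{P}_{k}\mathcal{R}_{0}$ into the listed initial conditions gives $c_{0}^{(r)}=\mathcal{P}_{0}\mathcal{R}_{0}=b_{0}^{(r)}\mathcal{R}_{0}$, then $c_{1}^{(r)}=(r\mathcal{P}_{0}+\mathcal{P}_{1})\mathcal{R}_{0}=b_{1}^{(r)}\mathcal{R}_{0}$, and likewise $c_{2}^{(r)}=b_{2}^{(r)}\mathcal{R}_{0}$. Because $\{b_{n}^{(r)}\}$ and $\{c_{n}^{(r)}\}$ satisfy the \emph{same} third-order recurrence with identical scalar coefficients, equations (\ref{2.1}) and (\ref{2.2}), right-multiplying the recurrence for $b_{n}^{(r)}$ by $\mathcal{R}_{0}$ and comparing it term-by-term with that for $c_{n}^{(r)}$ lets a one-line induction on $n$ propagate $c_{n}^{(r)}=b_{n}^{(r)}\mathcal{R}_{0}$ to all $n\geq 0$; commutativity of $\mathcal{P}_{k}$ with $\mathcal{R}_{0}$ then yields the two-sided form. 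An equivalent and perhaps cleaner route uses the Binet formulas (\ref{2.3})--(\ref{2.4}): since both sequences share the roots $\lambda_{1},\lambda_{2},\lambda_{3}$, the base identity forces $X_{2}=X_{1}\mathcal{R}_{0}$, $Y_{2}=Y_{1}\mathcal{R}_{0}$, $Z_{2}=Z_{1}\mathcal{R}_{0}$ (the factor $\mathcal{R}_{0}$ pulls out of each numerator, the denominators being scalar), whence $c_{n}^{(r)}=(X_{1}\lambda_{1}^{n}+Y_{1}\lambda_{2}^{n}+Z_{1}\lambda_{3}^{n})\mathcal{R}_{0}=b_{n}^{(r)}\mathcal{R}_{0}$.

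With the core identity established, the individual relations of the theorem drop out. Any product-type claim relating $b$ and $c$ follows by inserting $c_{m}^{(r)}=b_{m}^{(r)}\mathcal{R}_{0}$ and applying the multiplicativity $b_{n}^{(r)}b_{m}^{(r)}=b_{n+m}^{(r)}$ of Theorem~\ref{teo3}; for instance $b_{n}^{(r)}c_{m}^{(r)}=b_{n}^{(r)}b_{m}^{(r)}\mathcal{R}_{0}=b_{n+m}^{(r)}\mathcal{R}_{0}=c_{n+m}^{(r)}$, and analogous manipulations handle mixed products $c_{n}^{(r)}c_{m}^{(r)}$ or summed expressions, combined if needed with the summation formula and the generating function of Theorem~\ref{teo2}. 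The only genuine care needed is in the base step, where Proposition~\ref{prop1} must be applied with the correct indices and where the commutation of $\mathcal{R}_{0}$ with the $\mathcal{P}_{k}$ is exactly what legitimises moving the factor $\mathcal{R}_{0}$ from one side to the other; this is the main, though modest, obstacle. Everything past it is formal propagation within a fixed iteration level $r$, so I expect neither the induction nor the Binet bookkeeping to pose any real difficulty.
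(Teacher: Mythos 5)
There is a genuine gap: your proposal never engages with what Theorem \ref{teo4} actually asserts. Its three items are the \emph{difference} identities
\begin{equation*}
b_{n+1}^{(r)}-b_{n}^{(r)}=b_{1}^{(r-1)}b_{n}^{(r)},\qquad
c_{n+1}^{(r)}-c_{n}^{(r)}=b_{1}^{(r-1)}c_{n}^{(r)},\qquad
c_{n+1}^{(r)}-c_{n}^{(r)}=c_{1}^{(r-1)}b_{n}^{(r)},
\end{equation*}
which mix two iteration levels: consecutive terms at level $r$ against the first term at level $r-1$. Your plan explicitly restricts itself to ``formal propagation within a fixed iteration level $r$,'' and the consequences you actually derive ($b_{n}^{(r)}c_{m}^{(r)}=c_{n+m}^{(r)}$ and the like) are precisely the product rules already proved as Theorem \ref{teo3}; your core identity $c_{n}^{(r)}=b_{n}^{(r)}\mathcal{R}_{0}=\mathcal{R}_{0}b_{n}^{(r)}$ is true and correctly established (it is in fact the paper's Corollary following Theorem \ref{teo5}, the case $m=0$), but nothing in the proposal produces the cross-level factor $b_{1}^{(r-1)}$. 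So as written you have proved the corollary, not the theorem.

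The missing idea is a bridge between levels $r$ and $r-1$. The paper builds it from Pascal's rule, in the spirit of Lemma \ref{lem1}: $b_{n+1}^{(r)}-b_{n}^{(r)}=\sum_{i=0}^{n}\binom{n}{i}b_{i+1}^{(r-1)}$, after which Theorem \ref{teo3}-$i)$ applied at level $r-1$ gives $b_{i+1}^{(r-1)}=b_{1}^{(r-1)}b_{i}^{(r-1)}$, and resumming yields $b_{1}^{(r-1)}b_{n}^{(r)}$; items $ii)$ and $iii)$ are analogous. Your toolkit could be salvaged with one extra observation: from the initial conditions, $b_{1}^{(r)}-b_{0}^{(r)}=\left(r\mathcal{P}_{0}+\mathcal{P}_{1}\right)-\mathcal{P}_{0}=b_{1}^{(r-1)}$, so Theorem \ref{teo3}-$i)$ at level $r$ gives $b_{n+1}^{(r)}-b_{n}^{(r)}=\left(b_{1}^{(r)}-\mathcal{P}_{0}\right)b_{n}^{(r)}=b_{1}^{(r-1)}b_{n}^{(r)}$, and then your identity $c_{n}^{(r)}=b_{n}^{(r)}\mathcal{R}_{0}$ together with $c_{1}^{(r-1)}=b_{1}^{(r-1)}\mathcal{R}_{0}$ and the commutation with $\mathcal{R}_{0}$ does convert $i)$ into $ii)$ and $iii)$. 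But that cross-level step is the entire substance of the theorem, and it is absent from the proposal.
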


\begin{description}
\item[$i)$] $b_{n+1}^{\left( r\right) }-b_{n}^{\left( r\right)
}=b_{1}^{\left( r-1\right) }b_{n}^{\left( r\right) },$

\item[$ii)$] $c_{n+1}^{\left( r\right) }-c_{n}^{\left( r\right)
}=b_{1}^{\left( r-1\right) }c_{n}^{\left( r\right) },$

\item[$iii)$] $c_{n+1}^{\left( r\right) }-c_{n}^{\left( r\right)
}=c_{1}^{\left( r-1\right) }b_{n}^{\left( r\right) }.$
\end{description}

\begin{proof}
We will omit the proof of $ii)$ and $iii)$, since it is quite similar with $%
i).$ Therefore, by considering definition of iterated binomial transform and
Lemma \ref{lem1}-$i),$ we have%
\begin{equation*}
b_{n+1}^{\left( r\right) }-b_{n}^{\left( r\right) }=\sum\limits_{i=0}^{n}%
\binom{n}{i}b_{i+1}^{\left( r-1\right) }.
\end{equation*}%
From Theorem \ref{teo3}-$i),$ we get 
\begin{equation*}
b_{n+1}^{\left( r\right) }-b_{n}^{\left( r\right) }=\sum\limits_{i=0}^{n}%
\binom{n}{i}b_{i}^{\left( r-1\right) }b_{1}^{\left( r-1\right)
}=b_{1}^{\left( r-1\right) }b_{n}^{\left( r\right) }.
\end{equation*}
\end{proof}

\begin{theorem}
\label{teo5}For $n,m\geq 0,$ the relation between the transforms $\left\{
b_{n}^{\left( r\right) }\right\} $ and $\left\{ c_{n}^{\left( r\right)
}\right\} $ is%
\begin{equation*}
c_{m}^{\left( r-1\right) }b_{n}^{\left( r\right) }=b_{m}^{\left( r-1\right)
}c_{n}^{\left( r\right) }.
\end{equation*}
\end{theorem}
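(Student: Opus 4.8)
The plan is to reduce both sides to one and the same expression by expanding the level-$r$ factor through the binomial transform and then collapsing the resulting products with Theorem \ref{teo3}.

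First I would write out the level-$r$ transforms appearing in each product using the defining relation of the iterated binomial transform,
\begin{equation*}
b_n^{(r)} = \sum_{i=0}^n \binom{n}{i} b_i^{(r-1)}, \qquad c_n^{(r)} = \sum_{i=0}^n \binom{n}{i} c_i^{(r-1)}.
\end{equation*}
Substituting the first of these into the left-hand side of the claim gives
\begin{equation*}
c_m^{(r-1)} b_n^{(r)} = \sum_{i=0}^n \binom{n}{i}\, c_m^{(r-1)} b_i^{(r-1)},
\end{equation*}
while substituting the second into the right-hand side gives
\begin{equation*}
b_m^{(r-1)} c_n^{(r)} = \sum_{i=0}^n \binom{n}{i}\, b_m^{(r-1)} c_i^{(r-1)}.
\end{equation*}

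Next I would apply Theorem \ref{teo3}-$ii)$ at the level $r-1$, which gives $c_m^{(r-1)} b_i^{(r-1)} = b_i^{(r-1)} c_m^{(r-1)} = c_{m+i}^{(r-1)}$ and likewise $b_m^{(r-1)} c_i^{(r-1)} = c_{m+i}^{(r-1)}$. Both sums then collapse to the single common expression $\sum_{i=0}^n \binom{n}{i} c_{m+i}^{(r-1)}$, so the two sides coincide and the theorem follows with no further computation.

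The conceptual reason this works is the factorization $c_k^{(s)} = b_k^{(s)} \mathcal{R}_0 = \mathcal{R}_0 b_k^{(s)}$, which descends from $\mathcal{R}_k = \mathcal{P}_k \mathcal{R}_0 = \mathcal{R}_0 \mathcal{P}_k$ in Proposition \ref{prop1} together with the linearity of the binomial transform; this is exactly what underlies Theorem \ref{teo3}-$ii)$ and what allows the commuting constant $\mathcal{R}_0$ to pass freely across the product. I do not anticipate a genuine obstacle. The only point needing care is that Theorem \ref{teo3}-$ii)$ must be invoked at the previous level $r-1$ rather than at level $r$, and that — unlike part $i)$ — it carries no restriction on the relative size of the two indices, so it legitimately applies to every term of both sums regardless of whether $m \le i$. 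Once the expansion and this substitution are in place the identity is immediate.
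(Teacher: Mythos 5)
Your proposal is correct and is essentially the paper's own argument: the paper likewise expands the level-$r$ factor through the binomial transform and applies Theorem \ref{teo3}-$ii)$ at level $r-1$, merely arranging the computation as a single chain $c_m^{(r-1)}b_n^{(r)}=\sum_{i=0}^n\binom{n}{i}c_{m+i}^{(r-1)}=b_m^{(r-1)}c_n^{(r)}$ rather than expanding both sides toward the common middle expression as you do. Your closing observation that part $ii)$, unlike part $i)$, carries no restriction on the relative size of the indices is a valid point the paper leaves implicit.
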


\begin{proof}
By considering definition of iterated binomial transform, we have%
\begin{eqnarray*}
c_{m}^{\left( r-1\right) }b_{n}^{\left( r\right) } &=&c_{m}^{\left(
r-1\right) }\sum\limits_{i=0}^{n}\binom{n}{i}b_{i}^{\left( r-1\right) } \\
&=&\sum\limits_{i=0}^{n}\binom{n}{i}c_{m}^{\left( r-1\right) }b_{i}^{\left(
r-1\right) }.
\end{eqnarray*}%
From Theorem \ref{teo3}-$ii),$ we get%
\begin{eqnarray*}
c_{m}^{\left( r-1\right) }b_{n}^{\left( r\right) } &=&\sum\limits_{i=0}^{n}%
\binom{n}{i}c_{m+i}^{\left( r-1\right) } \\
&=&\sum\limits_{i=0}^{n}\binom{n}{i}b_{m}^{\left( r-1\right) }c_{i}^{\left(
r-1\right) } \\
&=&b_{m}^{\left( r-1\right) }c_{n}^{\left( r\right) }.
\end{eqnarray*}
\end{proof}

By choosing $m=0$ in Theorem \ref{teo5} and using the initial conditions of
equations (\ref{2.1}) and (\ref{2.2}), we obtain the following corollary.

\begin{corollary}
The following equalities are hold:
\end{corollary}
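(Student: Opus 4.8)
The plan is to obtain this corollary as a direct specialization of Theorem \ref{teo5}, so almost all of the work is already done there. First I would set $m=0$ in the relation $c_m^{\left( r-1\right) }b_n^{\left( r\right) }=b_m^{\left( r-1\right) }c_n^{\left( r\right) }$, which instantly yields $c_0^{\left( r-1\right) }b_n^{\left( r\right) }=b_0^{\left( r-1\right) }c_n^{\left( r\right) }$ for every $n\geq 0$ and $r\geq 1$.

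The next step is to evaluate the two zeroth-order terms that appear. Reading off the initial conditions attached to the recurrences (\ref{2.1}) and (\ref{2.2}), I note that $b_0^{\left( r-1\right) }={\mathcal{P}}_0$ and $c_0^{\left( r-1\right) }={\mathcal{R}}_0$; crucially these zeroth terms are independent of the iteration level, since the binomial transform fixes the $0$-th entry of any sequence (from $b_0=\binom{0}{0}x_0=x_0$). Because ${\mathcal{P}}_0$ is the $3\times 3$ identity matrix, the Padovan factor on the right collapses, and I am left with ${\mathcal{R}}_0\,b_n^{\left( r\right) }=c_n^{\left( r\right) }$, which is the asserted identity $c_n^{\left( r\right) }={\mathcal{R}}_0\,b_n^{\left( r\right) }$.

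If the corollary also records the reversed product $c_n^{\left( r\right) }=b_n^{\left( r\right) }\,{\mathcal{R}}_0$, I would justify it by commutativity: each $b_n^{\left( r\right) }$ is an integer combination of Padovan matrices ${\mathcal{P}}_i$, and by the second bullet of Proposition \ref{prop1} every ${\mathcal{P}}_i$ commutes with every Perrin matrix, since ${\mathcal{P}}_m{\mathcal{R}}_n={\mathcal{R}}_n{\mathcal{P}}_m={\mathcal{R}}_{n+m}$. Hence ${\mathcal{R}}_0$ commutes with $b_n^{\left( r\right) }$ and the two orderings agree.

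There is essentially no hard step in this argument; the only point that requires attention is the observation that the zeroth iterated terms $b_0^{\left( r-1\right) }$ and $c_0^{\left( r-1\right) }$ do not depend on $r$ and equal ${\mathcal{P}}_0$ and ${\mathcal{R}}_0$ respectively, so that the factor on the Padovan side is the identity and drops out. Everything else follows immediately from Theorem \ref{teo5} and Proposition \ref{prop1}.
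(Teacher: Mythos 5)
Your argument is precisely the paper's: the corollary is obtained by setting $m=0$ in Theorem \ref{teo5} and using the initial conditions $b_{0}^{\left( r-1\right) }={\mathcal{P}}_{0}=I$ and $c_{0}^{\left( r-1\right) }={\mathcal{R}}_{0}$ (your observation that the binomial transform fixes the zeroth term is the reason these are independent of $r$), giving $c_{n}^{\left( r\right) }={\mathcal{R}}_{0}b_{n}^{\left( r\right) }$. The only item you did not state explicitly is the paper's part $ii)$, $b_{n}^{\left( r\right) }={\mathcal{R}}_{0}^{-1}c_{n}^{\left( r\right) }$, which follows at once from part $i)$ by left-multiplication with ${\mathcal{R}}_{0}^{-1}$ (legitimate since $\det {\mathcal{R}}_{0}=23\neq 0$), so your proposal is correct and takes essentially the same route as the paper.
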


\begin{description}
\item[$i)$] $c_{n}^{\left( r\right) }=\mathcal{R}_{0}b_{n}^{\left( r\right)
},$

\item[$ii)$] $b_{n}^{\left( r\right) }=\mathcal{R}_{0}^{-1}c_{n}^{\left(
r\right) }.$
\end{description}

\begin{corollary}
We should note that choosing $r=1$ in the all results of Section 2 and 3, it
is actually obtained some properties of the iterated binomial transforms for
Padovan and Perrin matrix sequences such that the recurrence relations,
Binet formulas, summations, generating functions and relationships of
between binomial transforms for Padovan and Perrin matrix sequences.
\end{corollary}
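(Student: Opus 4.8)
The plan is to recognize that the final corollary is a \emph{specialization} claim rather than a new result: setting $r=1$ turns the iterated transform $B^{(r)}$ into the ordinary binomial transform $B$, so every formula of Sections 2 and 3 is a polynomial identity in $r$ whose value at $r=1$ is the corresponding single-transform statement. Accordingly, the argument splits into one genuine verification (that the $r=1$ sequences coincide with those of Proposition \ref{prop2}) followed by routine substitutions into each derived formula.

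First I would establish that $b_n^{(1)}=b_n$ and $c_n^{(1)}=c_n$ for all $n$, where $\{b_n\}$ and $\{c_n\}$ are the sequences of Proposition \ref{prop2}. Each pair of sequences is governed by a third-order linear recurrence together with three initial matrices, so it suffices to check that recurrences and initial data agree at $r=1$. Substituting $r=1$ into (\ref{2.1}) gives $b_{n+2}^{(1)}=3b_{n+1}^{(1)}-2b_n^{(1)}+b_{n-1}^{(1)}$, which is exactly (\ref{1.05}); likewise (\ref{2.2}) collapses to (\ref{1.06}). For the initial values, putting $r=1$ in $b_0^{(r)}={\mathcal{P}}_0$, $b_1^{(r)}=r{\mathcal{P}}_0+{\mathcal{P}}_1$, $b_2^{(r)}=r^2{\mathcal{P}}_0+2r{\mathcal{P}}_1+{\mathcal{P}}_2$ and carrying out the matrix additions reproduces the matrices $b_0,b_1,b_2$ displayed in Proposition \ref{prop2}; the Perrin case is identical with ${\mathcal{R}}_0,{\mathcal{R}}_1,{\mathcal{R}}_2$. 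Since recurrence and initial conditions match, the two sequences agree termwise.

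Next I would run through the derived results one at a time, each reducing by the same substitution. The characteristic polynomial $\lambda^3-3r\lambda^2+(3r^2-1)\lambda-(r^3-r+1)$ becomes $\lambda^3-3\lambda^2+2\lambda-1$, namely the characteristic polynomial of (\ref{1.05})–(\ref{1.06}), so the Binet expressions (\ref{2.3})–(\ref{2.4}) and their coefficients $X_i,Y_i,Z_i$ reduce to the Binet formulas of the ordinary transform. The summation formulas have denominator $r^3+2r$, which equals $3$ at $r=1$, giving the closed forms for $\sum b_i$ and $\sum c_i$; the generating-function denominator in Theorem \ref{teo2}, $1-3rx+(3r^2-1)x^2-(r^3-r+1)x^3$, becomes $1-3x+2x^2-x^3$. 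Finally the product relations of Theorem \ref{teo3}, $b_n^{(r)}b_m^{(r)}=b_{n+m}^{(r)}$ and $b_n^{(r)}c_m^{(r)}=c_m^{(r)}b_n^{(r)}=c_{n+m}^{(r)}$, specialize at $r=1$ to parts $iii)$ and $iv)$ of Proposition \ref{prop2}.

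I do not anticipate a real obstacle, since the statement is a specialization and not an independent theorem. The only step needing care is the very first one — confirming that applying the binomial transform once yields precisely the transform of Proposition \ref{prop2}. I handle this through the recurrence-plus-initial-conditions uniqueness argument above rather than by manipulating the defining binomial sums directly, after which all remaining claims follow by inspection of the $r=1$ values of the polynomial coefficients.
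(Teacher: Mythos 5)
Your proposal is correct, and it matches the paper's (implicit) justification exactly: the paper states this corollary without proof precisely because it is the specialization $r=1$, under which the recurrences (\ref{2.1})--(\ref{2.2}) collapse to (\ref{1.05})--(\ref{1.06}), the initial matrices $\mathcal{P}_0$, $\mathcal{P}_0+\mathcal{P}_1$, $\mathcal{P}_0+2\mathcal{P}_1+\mathcal{P}_2$ (and the Perrin analogues) reproduce the matrices of Proposition \ref{prop2}, and the remaining formulas reduce by substitution just as you list. The only remark worth adding is that your recurrence-plus-initial-conditions uniqueness step, while perfectly valid and arguably more self-contained, can be shortcut: by the very definition of the iterated transform, $b_n^{(1)}$ \emph{is} the single binomial transform of the Padovan matrix sequence, so its identification with the $\{b_n\}$ of Proposition \ref{prop2} is immediate from the cited earlier work.
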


\begin{conclusion}
In this paper, we define the iterated binomial transforms for Padovan and
Perrin matrix sequences and present some properties of these transforms. By
the results in Sections 2 and 3 of this paper, we have a great opportunity
to compare and obtain some new properties over these transforms. This is the
main aim of this paper. Thus, we extend some recent result in the literature.

In the future studies on the iterated binomial transform for number
sequences, we except that the following topics will bring a new insight.

\begin{description}
\item[(1)] It would be interesting to study the iterated binomial transform
for Fibonacci and Lucas matrix sequences,

\item[(2)] Also, it would be interesting to study the iterated binomial
transform for Pell and Pell-Lucas matrix sequences.
\end{description}
\end{conclusion}

\begin{acknowledgement}
This research is supported by TUBITAK and Selcuk University Scientific
Research Project Coordinatorship (BAP).
\end{acknowledgement}

\end{document}